\def\5n{\negthinspace \negthinspace \negthinspace \negthinspace \negthinspace }
\def\4n{\negthinspace \negthinspace \negthinspace \negthinspace }
\def\3n{\negthinspace \negthinspace \negthinspace }
\def\2n{\negthinspace \negthinspace }
\def\1n{\negthinspace }
   \def\cA{{\cal A}}  
   \def\cB{{\cal B}}  
   \def\cC{{\cal C}}  
   \def\cD{{\cal D}}  
\def\dbE{\mathbb{E}}     
\def\dbF{\mathbb{F}}   \def\cF{{\cal F}}  
   \def\cG{{\cal G}}
   \def\cJ{{\cal J}}
\def\dbP{\mathbb{P}}   \def\cP{{\cal P}}  
   \def\cQ{{\cal Q}}  
\def\dbR{\mathbb{R}}   \def\cR{{\cal R}}  
\def\dbS{\mathbb{S}}   \def\cS{{\cal S}}  
   \def\cU{{\cal U}}  
   \def\cV{{\cal V}}  
   \def\cX{{\cal X}}
\def\Om{\Omega}
\def\ss{\smallskip}                \def\lt{\left}
\def\ms{\medskip}                \def\rt{\right}
               \def\lan{\langle}
\def\nn{\nonumber}               \def\ran{\rangle}
\def\ra{\rightarrow}                         \def\ds{\displaystyle}             \def\ns{\noalign{\ss}}
\def\no{\noindent}        \def\q{\quad}                      \def\llan{\left\langle}
\def\ts{\times}           \def\qq{\qquad}                    \def\rran{\right\rangle}
         \def\rf{\eqref}                    \def\Blan{\Big\lan\!}
  \def\deq{\triangleq}               \def\Bran{\!\Big\ran}
            \def\({\Big (}
\def\les{\leqslant}                  \def\){\Big )}
\def\ges{\geqslant}          \def\[{\Big[}
           \def\]{\Big]}
                            \def\hp{\hphantom}
\def\a{\alpha}                 
\def\b{\beta}            \def\d{\delta}        
             \def\Si{\Sigma}  
\def\e{\varepsilon}             
         \def\f{\varphi}  \def\i{\infty}   
\def\bde{\begin{definition}\label}    \def\ede{\end{definition}}
\def\be{\begin{equation}}
\def\bel{\begin{equation}\label}      \def\ee{\end{equation}}
\def\bt{\begin{theorem}\label}        \def\et{\end{theorem}}
\def\bc{\begin{corollary}\label}      \def\ec{\end{corollary}}
\def\bl{\begin{lemma}\label}          \def\el{\end{lemma}}
\def\bp{\begin{proposition}\label}    \def\ep{\end{proposition}}
\def\bas{\begin{assumption}\label}    \def\eas{\end{assumption}}
\def\br{\begin{remark}\label}         \def\er{\end{remark}}
\def\bex{\begin{example}\label}       \def\ex{\end{example}}
\def\ba{\begin{aligned}}              \def\ea{\end{aligned}}
\def\ben{\begin{enumerate}}           \def\een{\end{enumerate}}
\newtheorem{theorem}{Theorem}[section]
\newtheorem{definition}[theorem]{Definition}
\newtheorem{proposition}[theorem]{Proposition}
\newtheorem{corollary}[theorem]{Corollary}
\newtheorem{lemma}[theorem]{Lemma}
\newtheorem{remark}[theorem]{Remark}
\newtheorem{example}[theorem]{Example}
\begin{document}

\title{{\bf General Indefinite Backward Stochastic Linear-Quadratic Optimal Control Problems}
}

\author{Jingrui Sun\thanks{Department of Mathematics, Southern University of Science and Technology, Shenzhen, Guangdong, 518055, China
(Email: {\tt sunjr@sustech.edu.cn}). This author is supported by NSFC grant 11901280 and Guangdong Basic
and Applied Basic Research Foundation 2021A1515010031.}~,~~~
Jiaqiang Wen\thanks{Department of Mathematics, Southern University of Science and Technology, Shenzhen, Guangdong, 518055, China (Email: {\tt wenjq@sustech.edu.cn}).
This author is supported by National Natural Science Foundation of China (Grant No. 12101291), Natural Science Foundation of Guangdong Province of China (Grant No. 2214050003543), and SUSTech start-up fund (Grant No. Y01286233).}~,~~~
Jie Xiong\thanks{Department of Mathematics and SUSTech International center for Mathematics, Southern University of Science and Technology, Shenzhen, Guangdong, 518055, China (Email: {\tt xiongj@sustech.edu.cn}).
This author is supported partially by National Natural Science Foundation of China (Grants No. 61873325 and 11831010), and SUSTech start-up fund (Grant No. Y01286120).}}

\maketitle


\no\bf Abstract. \rm
A general backward stochastic linear-quadratic optimal control problem is studied, in which both the state equation and the cost functional contain the nonhomogeneous terms. The main feature of the problem is that the weighting matrices in the cost functional are allowed to be indefinite and cross-product terms in the control and the state processes are present. Necessary and sufficient conditions for the solvability of the problem are obtained, and a characterization of the optimal control in terms of forward-backward stochastic differential equations is derived. By a Riccati equation approach, a general procedure for constructing optimal controls is developed and the value function is obtained explicitly.

\ms

\no\bf Key words: \rm Backward stochastic differential equation, linear-quadratic, optimal control, Riccati equation, nonhomogeneous term.

\ms

\no\bf AMS subject classifications. \rm
93E20, 49N10, 49N35.

\section{Introduction}

Due to its wide range of applications, the theory of stochastic optimal control developed
rapidly in the past few decades.
The stochastic control is a natural and effective method that can solve the uncertainty,
interference, and ambiguity emerging in real-world control problems.
As an important class of optimal control problems, the forward stochastic linear-quadratic problem
has been studied by a lot of researchers (see Wonham \cite{Wonham1968} and Davis \cite{Davis1977},
and the references cited therein).

\ms

In the historical development of stochastic optimal control, the backward stochastic differential equation
(BSDE, for short) plays a central role, which was introduced by Bismut \cite{Bismut1973} for the linear case
and by Pardoux--Peng \cite{Pardoux-Peng1990} for the nonlinear situation.
Linear BSDEs serve as the adjoint equation of the state equation in the study of the maximum principle
of stochastic optimal control problems (see Bismut \cite{Bismut1973} and Yong--Zhou \cite{Yong-Zhou1999}).
Control problems of BSDEs are also attractive and important, not only due to the theoretical level,
but also their applications in finance; see, for example, Ma--Yong \cite{Ma-Yong1999}, Pham \cite {Pham2009},
Peng \cite{Peng2010}, Zhang \cite{Zhang2017}, and the references cited therein.

\ms

In this paper, we study a class of quadratic control problems for linear BSDEs with nonhomogeneous terms,
in which the weighting matrices in the cost functional are allowed to be indefinite and cross-product terms
in the control and the state processes are present.
To precisely state our problem, let $(\Omega,\cF,\dbF,\dbP)$ be a complete filtered probability space on which
a one-dimensional standard Brownian motion $W=\{W(t);t\ges0\}$ is defined, where $\dbF=\{\cF_t\}_{t\ges0}$
is usual augmentation of the natural filtration generated by $W$.
For a random variable $\xi$, we write $\xi\in\cF_t$ if $\xi$ is $\cF_t$-measurable;
and for a stochastic process $\f$, we write $\f\in\dbF$ if it is progressively measurable with respect to $\dbF$.
Consider the following controlled linear BSDE on a finite horizon $[0,T]$:
\bel{state}\left\{\ba
\ds dY(t) &= [A(t)Y(t)+B(t)u(t)+C(t)Z(t)+f(t)]dt+Z(t)dW(t),  \\
\ns\ds Y(T) &= \xi,
\ea\right.\ee
where the coefficients $A,C:[0,T]\ra\dbR^{n\ts n}$ and $B:[0,T]\ra\dbR^{n\ts m}$ of the {\it state equation}
\rf{state} are given bounded deterministic functions; the nonhomogeneous term $f:[0,T]\ts\Om\ra\dbR^{n}$ is an $\dbF$-progressively
measurable process; and the terminal value $\xi$ belongs to the space
$$L^2_{\cF_T}(\Om;\dbR^n)\deq\Big\{\xi:\Om\to\dbR^n\bigm|\xi\in\cF_T  \text{ and }\dbE|\xi|^2<\i\Big\}.$$
The control process $u$, valued in $\dbR^m$, is taken from
\begin{align*}
  \cU\deq \bigg\{u:[0,T]\ts\Om\rightarrow\dbR^{m}~\big|&~u\in\dbF \text{ and } \dbE\int_0^T|u(s)|^2ds<\i \bigg\}.
\end{align*}
The criterion for the performance of $u$ is given by the following quadratic functional
\begin{equation}\label{cost}
\begin{aligned}
\ds J\big(\xi;u\big) &\deq\dbE\bigg\{\lan GY(0),Y(0)\ran+2\lan g,Y(0)\ran \\
\ns\ds &\hp{=\ } +\int_0^T\bigg[
\Blan\begin{pmatrix}Q(t)&S_1^\top(t)&S_2^\top(t)\\S_1(t)&R_{11}(t)&R_{12}(t)\\S_2(t)&R_{21}(t)&R_{22}(t) \end{pmatrix}\!
\begin{pmatrix}Y(t)\\Z(t)\\u(t)\end{pmatrix}\!,
\begin{pmatrix}Y(t)\\Z(t)\\u(t)\end{pmatrix}\Bran \\
\ns\ds &\hp{=\ } +2\Blan\begin{pmatrix}q(t)\\ \rho_1(t)\\ \rho_2(t) \end{pmatrix}\!,
   \begin{pmatrix}Y(t)\\Z(t)\\u(t)\end{pmatrix}\Bran\bigg] dt\bigg\},
\end{aligned}
\end{equation}
where the superscript $\top$ denotes the transpose of a matrix; $G$ is a symmetric $n\ts n$ constant
matrix; $g$ is a constant, $q$, $\rho_1$ and $\rho_2$ are $\dbF$-progressively measurable processes; and
\begin{align*}
  Q,\quad S=\begin{pmatrix}S_1\\ S_2\end{pmatrix},\quad  R=\begin{pmatrix}R_{11}&R_{12}\\R_{21}&R_{22}\end{pmatrix}
\end{align*}
are bounded deterministic matrix-valued functions of proper dimensions over $[0, T]$ such
that the blocked matrix in the cost functional is symmetric.
The backward stochastic optimal control problem of interest is as follows.

\ms

\no{\bf Problem (BSLQ).} For a given terminal state $\xi\in L^2_{\cF_T}(\Om;\dbR^n)$, find a control $u^*\in\cU$ such that
\begin{align}\label{Value}
  J(\xi;u^*)=\inf_{u\in\cU}J(\xi;u)\equiv V(\xi).
\end{align}
Due to the linearity of the backward state equation \rf{state} and the quadratic form of the cost \rf{cost},
we call the above problem a {\it backward stochastic linear-quadratic (LQ) optimal control problem} (BSLQ problem).
A process $u^*\in\cU$ satisfying \rf{Value} is called an {\it optimal control} of Problem (BSLQ) for the terminal state $\xi$,
and the corresponding adapted solution $(Y^*, Z^*)$ of the state equation \rf{state} is called an {\it optimal state process}.
The function $V$ is called the {\it value function} of Problem (BSLQ).
When the coefficients $f,g,q,\rho_1,\rho_2$ vanish, we denote the corresponding Problem (BSLQ) by Problem (BSLQ)$^0$.
The corresponding cost functional and value function are denoted by $J^0(\xi;u)$ and $V^0(\xi)$, respectively.

\ms

\ms

The BSLQ optimal control problem without nonhomogeneous terms was first studied by Lim--Zhou \cite{Lim-Zhou2001},
where all the weighting matrices are assumed to be positive semidefinite and the quadratic cost functional is
independent of the cross terms of $(Y,Z,u)$.
Applying a forward formulation and a limiting procedure, together with the completion-of-squares technique,
a complete solution for such a BSLQ optimal control problem was obtained in \cite{Lim-Zhou2001}.
A couple of follow-up works have appeared afterward; see, for instance, Huang--Wang--Xiong \cite{Huang-Wang-Xiong2009}
and Wang--Wu--Xiong \cite{Wang-Wu-Xiong2012} considered BSLQ optimal control problem with partial information;
Huang--Wang--Wu \cite{Huang-Wang-Wu2016} investigated a backward mean-field linear-quadratic-Gaussian game with
full and partial information;
Wang--Xiao--Xiong \cite{Wang-Xiao-Xiong2018} studied BSLQ optimal control problem with asymmetric information;
a dynamic game of linear BSDE systems with mean-field interactions was studied in Du--Huang--Wu \cite{Du-Huang-Wu2018};
a thorough investigation on BSLQ optimal control problem with random coefficients was further carried out
in Sun--Wang \cite{Sun-Wang2019};
a general mean-field BSLQ optimal control problem was investigated in Li--Sun--Xiong \cite{Li-Sun-Xiong2019};
and based on \cite{Li-Sun-Xiong2019,Lim-Zhou2001}, a theory of optimal control for controllable stochastic linear
systems was developed in Bi--Sun--Xiong \cite{Bi-Sun-Xiong2020}.
It is worth pointing out that, the key point of the above-mentioned works is that they assume the positive/nonnegative
definiteness condition imposed on the weighting matrices, and most of their cost functionals are independent
of the cross terms in $(Y,Z,u)$ and nonhomogeneous terms are not present.

\ms

We say that the stochastic LQ optimal control is {\it indefinite}, if the weighting matrices in the cost functional $J(\xi,u)$,
are not necessarily positive semi-definite.
Not assuming the positive definiteness/ semi-definiteness on the weighting matrices will bring great challenge
for solving Problem (BSLQ).
Recently, Sun--Wu--Xiong \cite{Sun-Wu-Xiong2021} considered a homogeneous backward stochastic LQ optimal control problem
and obtained the optimal control for the indefinite case.
However, their model is not general enough due to the lack of homogeneous terms.
For this reason, their results cannot directly apply to solving some related problems, especially the two-person
zero-sum Stackelberg game.
In this paper, we study a general indefinite BSLQ optimal control problem, in which both the state equation and
the cost functional contain nonhomogeneous terms.
As we shall see in Section 4, the nonhomogeneous terms bring lots of difficulties when constructing the optimal
control of Problem (BSLQ).
For example, we need to reconstruct the representation of the solution $Z$ and the optimal control $u$ in terms
of $X$, the solution of the corresponding forward dual process.
We shall first derive necessary and sufficient conditions for the existence of optimal controls,
and then characterize the optimal control by means of forward-backward stochastic differential equations
(FBSDEs, for short).
Finally, with this characterization, we develop a general procedure for constructing the optimal control
and the value function of Problem (BSLQ).

\ms

The rest of the paper is structured as follows.
We give the preliminaries and collect some recently developed results on general forward stochastic LQ optimal
control problems in Section 2. Characterization of the optimal control is presented in Section 3,
by means of FBSDEs.
In Section 4, we first simplify Problem (BSLQ) and construct the optimal control in the case that the cost functional
is uniformly convex, and then present the general results.
Section 5 concludes the paper.

\section{Preliminaries}

First, we introduce some notation. Let $\mathbb{R}^{n \times m}$ be the Euclidean space of $n \times m$ real matrices, equipped with the Frobenius inner product
$$
\lan M, N\ran=\operatorname{tr}(M^{\top} N), \quad M, N \in \mathbb{R}^{n \times m},
$$
where $\operatorname{tr}\left(M^{\top} N\right)$ is the trace of $M^{\top} N$. The norm induced by the Frobenius inner product is denoted by $|\cdot| .$ The identity matrix of size $n$ is denoted by $I_{n} .$ When no confusion arises, we often suppress the index $n$ and write $I$ instead of $I_{n} .$ Let $\mathbb{S}^{n}$ be the subspace of $\mathbb{R}^{n \times n}$ consisting of symmetric matrices. For $\mathbb{S}^{n}$ -valued functions $M$ and $N$, we write $M \geqslant N$ (respectively, $M>N)$ if $M-N$ is positive semidefinite (respectively, positive definite) almost everywhere (with respect to the Lebesgue measure), and write $M \gg 0$ if there exists a constant $\delta>0$ such that $M \geqslant \delta I_{n}$. For a subset $\mathbb{H}$ of $\mathbb{R}^{n \times m}$, we denote by $C([0, T] ; \mathbb{H})$ the space of continuous functions from $[0, T]$ into $\mathbb{H}$, and by $L^{\infty}(0, T ; \mathbb{H})$ the space of Lebesgue measurable, essentially bounded functions from $[0, T]$ into $\mathbb{H}$. Besides the space $L_{\mathcal{F}_{t}}^{2}\left(\Omega ; \mathbb{R}^{n}\right)$ introduced previously, the following spaces of stochastic processes will also be frequently used in the sequel:
\begin{align*}
L_{\mathbb{F}}^{2}(0,T;\mathbb{H})=&\ \bigg\{\varphi:[0, T] \times \Omega \rightarrow \mathbb{H} \mid \varphi \text { is $\dbF$-progressively measurable and } \nn \\
&\qq\ \dbE  \int_{0}^{T}|\varphi(t)|^{2} d t<\infty\bigg\}, \nn \\
L_{\dbF}^{2}(\Omega ; C([0, T] ; \mathbb{H}))=&\ \bigg\{\varphi:[0, T] \times \Omega \rightarrow \mathbb{H} \mid \varphi \text { has continuous paths, $\dbF$-adapted and }\nn  \\
&\qq \left. \dbE \left[\sup _{0 \leqslant t \leqslant T}|\varphi(t)|^{2}\right]<\infty\right\} ,\nn \\
L_{\dbF}^{2}(\Omega ; L^1([0, T] ; \mathbb{H}))=&\ \bigg\{\varphi:[0, T] \times \Omega \rightarrow \mathbb{H} \mid \varphi \text { is $\dbF$-progressively measurable and } \nn \\
&\qq \left. \dbE \left[\sup _{0 \leqslant t \leqslant T}|\varphi(t)|^{2}\right]<\infty\right\}. \nn
\end{align*}
For the coefficients of the state equation \rf{state} and the weighting matrices of the cost functional \rf{cost}, we impose the following conditions.
\begin{itemize}
  \item [\bf{(A1)}] The coefficients of the state equation \rf{state} satisfy
      $$A\in L^{\i}(0,T;\dbR^{n\ts n}),\ B\in L^{\i}(0,T;\dbR^{n\ts m}),\
       C\in L^{\i}(0,T;\dbR^{n\ts n}),\ f\in L^2_{\dbF}(0,T;\dbR^n).$$
  \item [\bf{(A2)}]  The coefficients among the nonhomogeneous term and the weighting matrices in the cost functional \rf{cost} satisfy
\begin{align*}
\ds       &G\in\dbS^n,\ Q\in L^{\i}(0,T;\dbS^n),\ S\in L^{\i}(0,T;\dbR^{(n+m)\ts n}),\
      R\in L^{\i}(0,T;\dbS^{n+m}), \nn \\
\ns\ds     & g\in \dbR^n,\ q\in L^2_{\dbF}(\Om;L^1(0,T;\dbR^n)),\ \rho_1\in L^2_{\dbF}(\Om;L^1(0,T;\dbR^n)),\
      \rho_2\in  L^2_{\dbF}(0,T;\dbR^m). \nn
\end{align*}
\end{itemize}
We present the following lemma concerning the well-posedness of the state equation \rf{state},
which is a direct consequence of the theory of linear BSDEs (see Chapter 7 of Yong--Zhou \cite{Yong-Zhou1999}).

\begin{lemma}\label{estimate}
Under the assumption (A1), for any $(\xi,u)\in L^2_{\cF_T}(\Om;\dbR^n)\ts \cU$, the state equation \rf{state} admits a unique adapted solution
\begin{align*}
  (Y,Z)\in L^2_{\dbF}(\Om;C([0,T];\dbR^n))\ts L^2_{\dbF}(0,T;\dbR^n).
\end{align*}
Furthermore, there exists a constant $K>0$, independent of $\xi$ and $u$, such that
\begin{align*}
\dbE\bigg[\sup_{0\les t\les T}|Y(t)|^2+\int_0^T|Z(t)|^2dt\bigg]\les
K\dbE\bigg[|\xi|^2+\int_0^T|u(t)|^2dt+\int_0^T|f(t)|^2dt\bigg].
\end{align*}
\end{lemma}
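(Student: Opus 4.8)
The plan is to reduce the statement to the standard existence, uniqueness, and a priori estimate theory for linear BSDEs, which under (A1) can be invoked directly from Chapter 7 of Yong--Zhou \cite{Yong-Zhou1999}. First I would observe that, for a fixed pair $(\xi,u)\in L^2_{\cF_T}(\Om;\dbR^n)\ts\cU$, the state equation \rf{state} is a \emph{linear} BSDE in the unknown pair $(Y,Z)$ with generator
$$
h(t,y,z)\deq A(t)y+C(t)z+\big[B(t)u(t)+f(t)\big].
$$
The coefficients $A$ and $C$ are in $L^\i(0,T;\dbR^{n\ts n})$ by (A1), so $y\mapsto A(t)y$ and $z\mapsto C(t)z$ are uniformly Lipschitz with a Lipschitz constant bounded by $\|A\|_\i+\|C\|_\i$. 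The ``free term'' $t\mapsto B(t)u(t)+f(t)$ is $\dbF$-progressively measurable, and since $B\in L^\i$, $u\in\cU$, and $f\in L^2_{\dbF}(0,T;\dbR^n)$, it satisfies
$$
\dbE\int_0^T|B(t)u(t)+f(t)|^2dt\les 2\|B\|_\i^2\,\dbE\int_0^T|u(t)|^2dt+2\,\dbE\int_0^T|f(t)|^2dt<\i.
$$
Together with $\dbE|\xi|^2<\i$, these are exactly the hypotheses of the classical BSDE well-posedness theorem, which yields a unique adapted solution $(Y,Z)$ with $Y$ having continuous paths, $\dbE[\sup_{0\les t\les T}|Y(t)|^2]<\i$, and $\dbE\int_0^T|Z(t)|^2dt<\i$; that is, $(Y,Z)\in L^2_{\dbF}(\Om;C([0,T];\dbR^n))\ts L^2_{\dbF}(0,T;\dbR^n)$.

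For the estimate, I would either quote the standard a priori bound for linear BSDEs directly, or reprove it by the usual route: apply It\^o's formula to $|Y(t)|^2$ on $[t,T]$, take expectations to kill the martingale part, and obtain
$$
\dbE|Y(t)|^2+\dbE\int_t^T|Z(s)|^2ds
=\dbE|\xi|^2-2\dbE\int_t^T\lan Y(s),\,A(s)Y(s)+C(s)Z(s)+B(s)u(s)+f(s)\ran ds.
$$
Bounding the inner-product terms using $\|A\|_\i,\|B\|_\i,\|C\|_\i$ and Young's inequality $2\lan Y,CZ\ran\les\e^{-1}|C|^2|Y|^2+\e|Z|^2$ with $\e=\tfrac12$ absorbs the $|Z|^2$ contribution into the left-hand side, leaving
$$
\dbE|Y(t)|^2+\tfrac12\dbE\int_t^T|Z(s)|^2ds
\les \dbE|\xi|^2+C_1\dbE\int_t^T|Y(s)|^2ds+C_2\dbE\int_0^T\big(|u(s)|^2+|f(s)|^2\big)ds
$$
for constants depending only on $\|A\|_\i,\|B\|_\i,\|C\|_\i,T$. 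Gr\"onwall's inequality applied to $t\mapsto\dbE|Y(t)|^2$ gives the pointwise bound on $\dbE|Y(t)|^2$, feeding that back yields the bound on $\dbE\int_0^T|Z|^2dt$, and finally the Burkholder--Davis--Gundy inequality applied to the stochastic-integral term in the equation upgrades $\sup_t\dbE|Y(t)|^2$ to $\dbE[\sup_t|Y(t)|^2]$. Collecting terms produces the asserted inequality with a constant $K>0$ depending only on the bounds of $A,B,C$ and on $T$, hence independent of $\xi$ and $u$.

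There is no real obstacle here; the only point requiring a little care is making the constant $K$ genuinely independent of $\xi$ and $u$, which is automatic because every coefficient bound entering Gr\"onwall and BDG ($\|A\|_\i$, $\|B\|_\i$, $\|C\|_\i$, $T$) is fixed by (A1) and none depends on the data $(\xi,u)$. Since this is entirely classical, I would keep the write-up to a single sentence citing \cite[Chapter 7]{Yong-Zhou1999} rather than reproducing the computation in full.
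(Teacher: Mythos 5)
Your proposal is correct and matches the paper exactly in spirit: the paper offers no written proof of Lemma \ref{estimate}, stating only that it ``is a direct consequence of the theory of linear BSDEs'' with a citation to Chapter 7 of Yong--Zhou, which is precisely the reduction you carry out (and your sketched It\^o/Gr\"onwall/BDG derivation is the standard argument behind that cited result). Nothing further is needed.
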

We next collect some results from forward stochastic LQ optimal control theory, which will be used to constructing the optimal control of Problem (BSLQ). Consider the forward linear stochastic differential equation on a finite time horizon $[0,T]$:
\bel{SDE}\left\{\ba
\ds d\cX(t) &=\big[\cA(t)\cX(t)+\cB(t)v(t)+b(t)\big]dt
+\big[\cC(t)X(t)+\cD(t)v(t)+\sigma(t)\big]dW(t), \\
\ns\ds \cX(0) &=x,
\ea\right.\ee
and the following cost functional
\begin{align}\label{cost-SDE}
\ds \cJ\big(x;v\big)\deq&~\dbE\Bigg\{\big\lan \cG\cX(T),\cX(T)\big\ran
+2\big\lan \tilde{g},\cX(T)\big\ran \nonumber \\
\ns\ds &~ +\int_0^T\Bigg[
\llan\begin{pmatrix}\cQ(t)&\cS^\top(t)\\\cS(t)&\cR(t) \end{pmatrix}
\begin{pmatrix}\cX(t)\\v(t)\end{pmatrix},
\begin{pmatrix}\cX(t)\\v(t)\end{pmatrix}\rran \\
\ns\ds &~ +2\llan\begin{pmatrix}\tilde{q}(t)\\ \tilde{\rho}(t)\end{pmatrix},
   \begin{pmatrix}\cX(t)\\ v(t)\end{pmatrix}\rran\Bigg] ds\Bigg\}, \nonumber
\end{align}
where in the equations \rf{SDE} and \rf{cost-SDE},
\begin{align*}
\ds &\cA,\cC\in L^{\i}(0,T;\dbR^{n\ts n}),\ \cB,\cD\in L^{\i}(0,T;\dbR^{n\ts m}),\
  b,\sigma\in L^2_{\dbF}(0,T;\dbR^n), \nn \\
\ns\ds &\cG\in\dbS^n,\ \cQ\in L^{\i}(0,T;\dbS^n),\ \cS\in L^{\i}(0,T;\dbR^{m\ts n}),\
 \cR\in L^{\i}(0,T;\dbS^{m}),\nn \\
\ns\ds & \tilde{g}\in \dbR^n,\ \tilde{q}\in L^2_{\dbF}(\Om;L^1(0,T;\dbR^n)),\ \tilde{\rho}\in L^2_{\dbF}(\Om;L^1(0,T;\dbR^m)).\nn
\end{align*}
The stochastic linear-quadratic optimal control problem of the forward type is stated as follows.

\ms

{\bf Problem (FSLQ).} For a given initial state $x\in\dbR^n$, find a control $v^*\in\cU$ such that
\begin{align}\label{FValue}
  \cJ(x;v^*)=\inf_{u\in\cU}\cJ(x;v)\equiv \cV(x).
\end{align}
The control $v^{*}$ (if it exists) in \rf{FValue} is called an open-loop optimal control for the initial state $x$, and $\mathcal{V}(x)$ is called the value of Problem (FSLQ) at $x$. Note that Problem (FSLQ) is an indefinite LQ optimal control problem, since we do not require the weighting matrices to be positive semidefinite.
The following propositions establish the solvability of Problem (FSLQ) under a condition that is nearly necessary for the existence of open-loop optimal controls, and establish a regularity of the solution to the Riccati equation. We refer the reader to Sun--Yong \cite{Sun-Yong 2014}, Sun--Li--Yong \cite{Sun-Li-Yong 2016} and the recent book \cite{Sun-Yong 2020}
for proofs and further information.
\begin{proposition}
Assume that there exists a constant $\a>0$ such that
\begin{equation}\label{2.1}
\cJ(0;u) \ges \a\|u\|^2,\qq\forall u\in\cU.
\end{equation}
Then the following Riccati differential equation
\bel{Riccati-SDE}\left\{\ba
\ds & \dot{\cP}+\cP\cA+\cA^{\top}\cP+\cC^{\top}\cP\cC+\cQ  \\
\ns\ds &\hp{\dot{\cP}} -(\cP\cB+\cC^{\top}\cP\cD+\cS^{\top})(\cR+\cD^{\top}\cP\cD)^{-1}
(\cB^{\top}\cP+\cD^{\top}\cP\cC+\cS)=0,\\
\ns\ds  & \cP(T)=\cG,
\ea\right.\ee
admits a unique solution $\cP\in C([0,T];\dbS^n)$ such that
$$\cR+\cD^{\top}\cP\cD\gg0.$$
In addition, for each initial state $x$, a unique open-loop optimal control exists and is given
by the following closed-loop form:
\begin{align*}
\ds v^*(t)=& -(\cR+\cD^{\top}\cP\cD)^{-1}(\cB^{\top}\cP+\cD^{\top}\cP\cC+\cS)X^*\\
\ns\ds        & -(\cR+\cD^{\top}\cP\cD)^{-1}(\cB^\top\eta
       +\cD^{\top}\zeta+\cD^{\top}\cP\tilde{\sigma}+\tilde{\rho}),\qq t\in[0,T],
\end{align*}
where $X^*$ is the solution of the following closed-loop system:
$$\left\{\ba
\ds dX^*(t) &=\Big\{\big[\cA-\cB(\cR+\cD^\top \cP\cD)^{-1}(\cB^\top \cP+\cD^\top \cP\cC+\cS)\big]X^*\\
\ns\ds &\hp{=\ } -\cB(\cR+\cD^{\top}\cP\cD)^{-1}
(\cB^\top\eta+\cD^{\top}\zeta+\cD^{\top}\cP\sigma+\tilde{\rho})+b\Big\}dt\\
\ns\ds &\hp{=\ } +\Big\{\big[\cC-\cD(\cR+\cD^\top \cP\cD)^{-1}(\cB^\top \cP
+\cD^\top \cP\cC+\cS)\big]X^*\\
\ns\ds &\hp{=\ } -\cD(\cR+\cD^{\top}\cP\cD)^{-1}
(\cB^\top\eta+\cD^{\top}\zeta+\cD^{\top}\cP\sigma+\tilde{\rho})+\sigma\Big\}dW(t),\\
\ns\ds X^*(0) &=x,
\ea\right.$$
and  $(\eta,\zeta)$ is the adapted solution of the following backward stochastic differential equation,
$$\left\{\ba
\ds d\eta(t) &= -\Big\{\big[\cA^\top-(\cP\cB+\cC^\top \cP\cD+\cS^\top)(\cR
+\cD^\top \cP\cD)^{-1}\cB^\top\big]\eta\\
\ns\ds &\hp{=\ } +\big[\cC^\top-(\cP\cB+\cC^\top \cP\cD+\cS^\top)(\cR
+\cD^\top \cP\cD)^{-1}\cD^\top\big]\zeta\\
\ns\ns\ds &\hp{=\ } +\big[\cC^\top-(\cP\cB+\cC^\top \cP\cD+\cS^\top)
(\cR+\cD^\top \cP\cD)^{-1}\cD^\top\big]\cP\sigma\\
\ns\ds &\hp{=\ } -(\cP\cB+\cC^\top \cP\cD+\cS^\top)(\cR+\cD^\top \cP\cD)^{-1}\tilde{\rho}
+\cP b+\tilde{q}\Big\}dt \\
\ns\ds &\hp{=\ } + \zeta dW(t),\q t\in[0,T],\\
\eta(T) &=\tilde{g}.
\ea\right.$$
Furthermore, the value function at $x$ is given by
$$\cV(x)=\lan \cP(0)x,x\ran ,\q~\forall x\in\dbR^n.$$
\end{proposition}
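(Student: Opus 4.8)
The plan is to combine a convex-analytic argument in the Hilbert space $\cU$ with a backward construction of the Riccati solution, the non-degeneracy $\cR+\cD^\top\cP\cD\gg0$ being forced out of the single scalar hypothesis \rf{2.1}, and then to identify the optimal control and compute the value by completing the square. First I would settle the existence and uniqueness of the open-loop optimal control. For a fixed initial state $x$, the solution map $v\mapsto\cX$ of \rf{SDE} is affine and continuous from $\cU$ into $L^2_\dbF(\Om;C([0,T];\dbR^n))$ by standard SDE theory, so $v\mapsto\cJ(x;v)$ is a continuous quadratic functional on $\cU$. Writing $\cJ(x;v)=\cJ(0;v)+2\lan\ell(x),v\ran_\cU+c(x)$ with $\ell(x)\in\cU$ depending linearly (hence continuously) on $x$, assumption \rf{2.1} gives $\cJ(x;v)\ges\a\|v\|^2-2\|\ell(x)\|\,\|v\|+c(x)$, so $\cJ(x;\cd)$ is strictly convex and coercive and therefore has a unique minimizer $v^*\in\cU$, characterized by $D\cJ(x;v^*)=0$. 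This already proves the existence and uniqueness in \rf{FValue} and reduces the rest to identifying $v^*$ and evaluating $\cJ(x;v^*)$.

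The core of the proof is the global solvability of \rf{Riccati-SDE}. I would first observe that \rf{2.1} is inherited by every sub-interval: extending a control defined on $[t,T]$ by zero on $[0,t)$ shows that the analogous problem on $[t,T]$ started from the origin also satisfies $\cJ_t(0;u)\ges\a\|u\|^2$. A localization argument, testing this convexity with controls supported on a shrinking window, transfers it to the pointwise estimate $\cR(t)+\cD(t)^\top\cP(t)\cD(t)\ges\a I$ wherever $\cP$ is defined (at $t=T$ this reads $\cR(T)+\cD(T)^\top\cG\cD(T)\ges\a I$), which both provides the non-degeneracy needed to launch \rf{Riccati-SDE} backward from $\cP(T)=\cG$ and keeps its right-hand side locally Lipschitz; hence a symmetric solution exists locally near $t=T$. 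To continue it to all of $[0,T]$ I would derive a uniform a priori bound: on any sub-interval $[t_0,T]$ where $\cP$ exists, Itô's formula applied to $\lan\cP\cX,\cX\ran$ along an arbitrary control identifies $\lan\cP(t_0)x,x\ran$ with the value at $x$ of the homogeneous problem on $[t_0,T]$, and by \rf{2.1} (as in the coercivity estimate above) and by testing with $v=0$ this quantity lies between $-C|x|^2$ and $C|x|^2$ with $C$ independent of $t_0$, so $\|\cP(t_0)\|\les C$. A bounded solution of \rf{Riccati-SDE} cannot blow up in finite time and therefore extends to $[0,T]$, the bound $\cR+\cD^\top\cP\cD\ges\a I\gg0$ being preserved throughout by the window estimate. (An equivalent route, which I would keep in reserve, is penalization: solve the Riccati equation for the strictly convex data $(\cQ+\e I,\cR+\e I,\cG+\e I)$ by the classical positive-definite theory, note that $\cP_\e$ is monotone in $\e$ and uniformly bounded by the same value-function estimate, and pass to the limit $\e\downarrow0$.)

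With $\Si:=\cR+\cD^\top\cP\cD\gg0$ in hand, set $\Th:=-\Si^{-1}(\cB^\top\cP+\cD^\top\cP\cC+\cS)$, let $(\eta,\zeta)$ be the adapted solution of the linear BSDE in the statement, and put $\nu:=-\Si^{-1}(\cB^\top\eta+\cD^\top\zeta+\cD^\top\cP\sigma+\tilde\rho)$, so that the asserted feedback is $v^*=\Th X^*+\nu$ and $X^*$ solves the displayed closed-loop system. For an arbitrary $v\in\cU$ with corresponding state $\cX$, applying Itô's formula to $t\mapsto\lan\cP(t)\cX(t),\cX(t)\ran+2\lan\eta(t),\cX(t)\ran$, taking expectations, and using \rf{Riccati-SDE} together with the $\eta$-equation to cancel all terms not purely quadratic in the shifted control, one completes the square and obtains
$$\cJ(x;v)=\lan\cP(0)x,x\ran+2\lan\eta(0),x\ran+c_0+\dbE\int_0^T\big\lan\Si(t)\big(v(t)-\Th(t)\cX(t)-\nu(t)\big),\,v(t)-\Th(t)\cX(t)-\nu(t)\big\ran dt,$$
with $c_0$ independent of $v$. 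Since $\Si\gg0$, the right-hand side is minimized precisely when $v=\Th\cX+\nu$, i.e.\ by the closed-loop control above (which belongs to $\cU$ by the usual linear estimates); by the uniqueness from the first step this is the open-loop optimal control, and substituting it leaves $\cV(x)=\lan\cP(0)x,x\ran$ (together with the affine term $2\lan\eta(0),x\ran$ and the constant $c_0$, both vanishing in the homogeneous case).

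The step I expect to be the genuine obstacle is the second one: with no sign condition on the individual weighting matrices, neither the global existence of a symmetric solution of \rf{Riccati-SDE} nor the decisive non-degeneracy $\cR+\cD^\top\cP\cD\gg0$ is automatic, and both must be extracted from the lone inequality \rf{2.1}, via the sub-interval/shrinking-window device and the uniform value-function bound (or the penalization-and-limit argument). Once $\cP$ is available, the first and third steps are routine convex analysis and completion of squares.
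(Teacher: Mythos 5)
The paper itself does not prove this proposition: it is imported from the forward LQ literature, with the proof explicitly deferred to Sun--Yong (2014), Sun--Li--Yong (2016) and the book of Sun--Yong (2020). Your sketch reconstructs essentially the argument of those references --- unique minimizer from uniform convexity plus coercivity, solvability of \rf{Riccati-SDE} with $\cR+\cD^\top\cP\cD\ges\a I$ extracted from the sub-interval convexity via a shrinking-window test and a uniform value-function bound, then completion of squares using $\lan\cP\cX,\cX\ran+2\lan\eta,\cX\ran$ --- so the overall route is the standard one and is sound in outline.

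Two concrete points, though. First, your continuation argument for the Riccati equation is circular as written: both the uniform bound $\|\cP(t_0)\|\les C$ and the window estimate $\cR+\cD^\top\cP\cD\ges\a I$ rest on identifying $\cP(t_0)$ with the value-function matrix of the homogeneous problem on $[t_0,T]$, but that identification via completion of squares already needs $\cR+\cD^\top\cP\cD\ges0$ there; mere invertibility, which is all the local ODE solution guarantees on its maximal interval, leaves the residual square term without a sign, so neither the upper bound (testing with $v=0$) nor the lower bound goes through. The cited references avoid this by defining $P(t)$ directly as the value-function matrix of the problem on $[t,T]$ --- which exists, is symmetric and is uniformly bounded by your Step 1 applied on each sub-interval, with no Riccati equation in sight --- then proving continuity and the window estimate $\cR+\cD^\top P\cD\ges\a I$ for that object, and only afterwards showing that it solves the ODE; alternatively one runs a connectedness argument on the set of $t$ for which the positivity holds on all of $[t,T]$. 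Your reserve penalization route does not repair the gap: replacing $(\cQ,\cR,\cG)$ by $(\cQ+\e I,\cR+\e I,\cG+\e I)$ does not put the data into the classical positive-semidefinite regime when $\cQ$ or $\cG$ is genuinely indefinite, so ``the classical positive-definite theory'' is not applicable to the perturbed problem. Second, your completion of squares correctly yields $\cV(x)=\lan\cP(0)x,x\ran+2\lan\eta(0),x\ran+c_0$ with an affine term and a constant built from $(b,\sigma,\tilde g,\tilde q,\tilde\rho)$; the displayed formula $\cV(x)=\lan\cP(0)x,x\ran$ in the proposition is literally valid only when the nonhomogeneous data vanish, so here the imprecision is in the statement rather than in your proof.
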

\begin{proposition}
Assume that
\begin{equation}\label{2.2}
\cG\gg0,\q\cR\gg0,\q\cQ-\cS^\top\cR^{-1}\cS\gg0.
\end{equation}
Then the uniformly convex condition \rf{2.1} holds for a constant $\a>0$, and the solution of Riccati equation \rf{Riccati-SDE} satisfies
$$\cP(t)\ges0,\q~\forall t\in[0,T].$$
Moreover, if in addition to \rf{2.2}, $\cG>0$, then the solution $\cP(t)>0$ for all $t\in[0,T]$.
\end{proposition}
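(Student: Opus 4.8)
The plan is to establish the two claims in turn. First I would deduce the uniform convexity \rf{2.1} from \rf{2.2}. Given $u\in\cU$, let $\cX$ be the solution of \rf{SDE} with $x=0$ and $b=\sigma=0$. Since $\cR\ges\delta_2 I$ for some $\delta_2>0$, $\cR$ is invertible with $\cR^{-1}$ essentially bounded, and completing the square in the integrand of \rf{cost-SDE} gives
$$\big\langle\cQ\cX,\cX\big\rangle+2\big\langle\cS\cX,u\big\rangle+\big\langle\cR u,u\big\rangle=\big\langle(\cQ-\cS^\top\cR^{-1}\cS)\cX,\cX\big\rangle+\big\langle\cR\,(u+\cR^{-1}\cS\cX),\,u+\cR^{-1}\cS\cX\big\rangle .$$
With $\cQ-\cS^\top\cR^{-1}\cS\ges\delta_1 I$ for some $\delta_1>0$ and $\cG\ges0$, integrating and taking expectations shows $\cJ(0;u)\ges\delta_1\,\dbE\int_0^T|\cX(t)|^2dt+\delta_2\,\dbE\int_0^T|u(t)+\cR^{-1}(t)\cS(t)\cX(t)|^2dt$; since $u=(u+\cR^{-1}\cS\cX)-\cR^{-1}\cS\cX$ and $\cR^{-1}\cS$ is bounded,
$$\|u\|^2\ \les\ 2\,\dbE\int_0^T\big|u(t)+\cR^{-1}(t)\cS(t)\cX(t)\big|^2dt+2\|\cR^{-1}\cS\|_\infty^2\,\dbE\int_0^T|\cX(t)|^2dt\ \les\ C\,\cJ(0;u)$$
for a constant $C$ depending only on $\delta_1,\delta_2,\|\cR^{-1}\cS\|_\infty$, which is \rf{2.1} with $\a=1/C$. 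Note only $\cG\ges0$ is used, so $\cG\gg0$ is more than enough.

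Next I would pass from \rf{2.1} to the sign of $\cP$ via a value-function identity on subintervals. By the first step and the preceding proposition, \rf{Riccati-SDE} has a unique solution $\cP\in C([0,T];\dbS^n)$ and $\cV(x)=\lan\cP(0)x,x\ran$. Fix $t\in[0,T)$. Since \rf{2.2} also holds on $[t,T]$, the same two results apply to Problem (FSLQ) posed on $[t,T]$; and because \rf{Riccati-SDE} is a terminal-value problem, its solution on $[t,T]$ is precisely the restriction of $\cP$, so the value of the $[t,T]$-problem at an initial state $x$ (at time $t$) equals $\lan\cP(t)x,x\ran$. For the homogeneous data $b=\sigma=0$ and $\tilde g=\tilde q=\tilde\rho=0$, the completion of squares above (now over $[t,T]$), together with $\cG\ges0$, $\cQ-\cS^\top\cR^{-1}\cS\ges0$ and $\cR\ges0$, shows the associated cost is $\ges0$ for every control; taking the infimum gives $\lan\cP(t)x,x\ran\ges0$. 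As $x\in\dbR^n$ is arbitrary and $\cP(t)$ is symmetric, $\cP(t)\ges0$; together with $\cP(T)=\cG\ges0$ this gives $\cP(t)\ges0$ for all $t\in[0,T]$.

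Finally, for the strict bound I would argue by contradiction. Assume $\cG>0$ but $\lan\cP(t_0)x_0,x_0\ran=0$ for some $t_0\in[0,T]$ and some $x_0\neq0$. By the preceding proposition applied on $[t_0,T]$ there is an optimal control $v^*$, with optimal state $\cX^*$ (continuous paths, $\cX^*(t_0)=x_0$), whose cost equals $\lan\cP(t_0)x_0,x_0\ran=0$. Expanding that cost through the completion of squares forces each of the three nonnegative contributions — the terminal term $\dbE\lan\cG\cX^*(T),\cX^*(T)\ran$, the term carrying $\cQ-\cS^\top\cR^{-1}\cS$, and the term carrying $\cR$ — to vanish; then $\cG>0$, $\cQ-\cS^\top\cR^{-1}\cS\gg0$ and the path-continuity of $\cX^*$ force $\cX^*(t_0)=0$, contradicting $\cX^*(t_0)=x_0\neq0$. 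Hence $\cP(t)>0$ for all $t$. I expect the only point requiring care to be the identification, in the second step, of the value of the restricted problem with $\lan\cP(t)x,x\ran$; this rests on uniqueness for \rf{Riccati-SDE} and on \rf{2.2} being inherited by every subinterval $[t,T]$. The strengthening from $\cG\ges0$ to $\cG>0$ is felt only at $t=T$, where $\cP(T)=\cG$, since for $t<T$ the block $\cQ-\cS^\top\cR^{-1}\cS\gg0$ already does the work; the remaining manipulations are routine.
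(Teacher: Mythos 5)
Your proof is correct. The paper does not actually prove this proposition itself --- it is quoted from Sun--Yong \cite{Sun-Yong 2014}, Sun--Li--Yong \cite{Sun-Li-Yong 2016} and the book \cite{Sun-Yong 2020} --- but your argument (completion of squares under \rf{2.2} to obtain uniform convexity, restriction to subintervals together with uniqueness of the Riccati solution to get $\cP(t)\ges0$, and a vanishing-cost contradiction exploiting $\cQ-\cS^\top\cR^{-1}\cS\gg0$ and path continuity for strict positivity) is precisely the standard proof given in those references. The only cosmetic remark is that, as in those references, condition \rf{2.1} must be read as a statement about the purely quadratic (homogeneous) part of the functional, i.e.\ with $b=\sigma=0$ and $\tilde g=\tilde q=\tilde\rho=0$, which is exactly how you use it.
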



\section{A characterization of optimal controls in terms of FBSDEs}

We now present a characterization of the optimal control in terms of forward-backward stochastic differential equations, which will be used to prove the control constructed later to be optimal.
\begin{theorem}\label{Thm3.1}
Let (A1) and (A2) hold and let the terminal state  $\xi\in L^2_{\cF_T}(\Om;\dbR^n)$ be given.
A control $u^*\in\cU$ is optimal for $\xi$ if and only if the following two conditions hold:
\begin{itemize}
  \item [(i)] $J^0(0;u)\ges0$ for all $u\in\cU$.
  \item [(ii)] The adapted solution $(X^*,Y^*,Z^*)$ to the following decoupled FBSDE
$$\left\{\ba
\ds dX^*(t) &= \big[-A^\top(t)X^*(t)+Q(t)Y^*(t)+S^\top_1(t)Z^*(t)+S^\top_2(t)u^*(t)+q(t)\big]dt\\
\ns\ds &\hp{=\ } +\big[-C(t)X^*(t)+S_1(t)Y^*(t)
+R_{11}(t)Z^*(t)+R_{12}(t)u^*(t)+\rho_1(t)\big]dW(t),\\
\ns\ds dY^*(t) &= \big[A(t)Y^*(t)+B(t)u^*(t)+C(t)Z^*(t)+f(t)\big]dt+Z^*(t)dW(t),\ \ t\in[0,T],\\
\ns\ds X^*(0) &= GY^*(0)+g,\q Y^*(T)=\xi,
\ea\right.$$
\end{itemize}
satisfies
\begin{align}\label{3.2}
  S_2(t)Y^*(t)+R_{21}(t)Z^*(t)-B^\top(t)X^*(t)+R_{22}u^*(t) +\rho_2(t)=0,\q~ t\in[0,T].
\end{align}
\end{theorem}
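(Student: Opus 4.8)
The plan is to characterize optimality via a standard convex-analytic perturbation argument, exploiting the fact that the map $u \mapsto J(\xi;u)$ is a quadratic functional on the Hilbert space $\cU$. First I would observe that for any fixed $\xi$ the cost functional splits as
$$
J(\xi;u) = J^0(0;u-u_0) + (\text{affine terms}),
$$
or, more concretely, write $J(\xi; u^*+\l v) = J(\xi;u^*) + 2\l \cM(u^*,v) + \l^2 J^0(0;v)$ for $v\in\cU$ and $\l\in\dbR$, where $\cM(u^*,\cdot)$ is a bounded linear functional on $\cU$ capturing the first-order term. Here I use Lemma \ref{estimate} to guarantee that the state $(Y,Z)$ depends continuously and affinely on $u$, so that all these quantities are well defined and finite. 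From this quadratic expansion, $u^*$ is optimal if and only if (a) $J^0(0;v)\ges0$ for every $v\in\cU$, which is exactly condition (i), and (b) the linear term vanishes: $\cM(u^*,v)=0$ for all $v\in\cU$. The bulk of the proof is then to show that condition (b) is equivalent to the FBSDE condition (ii).

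The key computational step is to express the Gateaux derivative $\cM(u^*,v)$ explicitly. Introduce the variational state equation: if $(Y^*,Z^*)$ is the state for $u^*$ and $(Y^v,Z^v)$ the state for $u^*+v$, then $(\wt Y,\wt Z)\deq(Y^v-Y^*,Z^v-Z^*)$ solves the linear BSDE with zero terminal value, generator $A\wt Y + Bv + C\wt Z$, and no nonhomogeneous term. Differentiating $J(\xi;\cdot)$ gives
$$
\cM(u^*,v)=\dbE\Big\{\lan GY^*(0)+g,\wt Y(0)\ran+\int_0^T\big[\lan QY^*+S_1^\top Z^*+S_2^\top u^*+q,\wt Y\ran+\lan S_1Y^*+R_{11}Z^*+R_{12}u^*+\rho_1,\wt Z\ran+\lan S_2Y^*+R_{21}Z^*+R_{22}u^*+\rho_2,v\ran\big]dt\Big\}.
$$
Now introduce the forward adjoint process $X^*$ as the solution of the forward SDE displayed in condition (ii): its drift and diffusion coefficients are chosen precisely so that, upon applying It\^o's formula to $\lan X^*(t),\wt Y(t)\ran$ on $[0,T]$ and taking expectations, the terms involving $\lan QY^*+S_1^\top Z^*+S_2^\top u^*+q,\wt Y\ran$ and $\lan S_1Y^*+R_{11}Z^*+R_{12}u^*+\rho_1,\wt Z\ran$ in $\cM(u^*,v)$ are produced, while the boundary terms reproduce $\lan GY^*(0)+g,\wt Y(0)\ran$ (using $X^*(0)=GY^*(0)+g$ and $\wt Y(T)=0$). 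What survives after this duality computation is
$$
\cM(u^*,v)=\dbE\int_0^T\lan S_2Y^*+R_{21}Z^*-B^\top X^*+R_{22}u^*+\rho_2,\ v\ran\,dt.
$$

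Once this identity is in hand, the equivalence is immediate: $\cM(u^*,v)=0$ for all $v\in\cU$ if and only if the $\dbF$-progressively measurable process $S_2Y^*+R_{21}Z^*-B^\top X^*+R_{22}u^*+\rho_2$ vanishes $dt\otimes d\dbP$-almost everywhere, which is \rf{3.2}. For the forward direction one takes $v$ ranging over a dense family; for the backward direction one simply substitutes. Finally I would note the existence and uniqueness of the adapted solution $(X^*,Y^*,Z^*)$: the system is decoupled (solve the backward equation for $(Y^*,Z^*)$ first given $u^*$, then the forward equation for $X^*$ with the now-known inputs), so well-posedness follows from standard SDE/BSDE theory, and $X^*\in L^2_\dbF(\Om;C([0,T];\dbR^n))$. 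The main obstacle, and the step requiring the most care, is the It\^o/duality bookkeeping: one must verify that the particular coefficients chosen for the $X^*$-equation — including the signs and the placement of $Q,S_1,S_1^\top,S_2^\top,R_{11},R_{12}$ — exactly match the integrand of $\cM(u^*,v)$, and that all the cross-terms arising from the product rule (in particular the $\lan C\wt Z,\cdot\ran$-type and $dW$-bracket contributions) cancel correctly; integrability of all products, needed to justify dropping the martingale parts, is controlled by Lemma \ref{estimate} together with (A1)--(A2).
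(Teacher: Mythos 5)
Your proposal is correct and follows essentially the same route as the paper: a quadratic expansion $J(\xi;u^*+\lambda v)=J(\xi;u^*)+2\lambda\,\cM(u^*,v)+\lambda^2 J^0(0;v)$ via the variational BSDE with zero terminal value, followed by It\^o/duality applied to $\lan X^*,\wt Y\ran$ to reduce the first-order term to $\dbE\int_0^T\lan S_2Y^*+R_{21}Z^*-B^\top X^*+R_{22}u^*+\rho_2,v\ran\,dt$. The cancellation bookkeeping you flag as the delicate step works out exactly as you describe, and your remark on well-posedness of the decoupled FBSDE matches the paper's (implicit) treatment.
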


\begin{proof}
Note that $u^*\in\cU$ is optimal for $\xi$ if and only if
\begin{align}\label{3.5}
  J(\xi;u^*+\e u)-J(\xi;u^*)\ges0,\q~\forall u\in\cU,\ \forall\e\in\dbR.
\end{align}
Let $u\in\cU$ and $\e\in\dbR$ be fixed but arbitrary. Denote by $(Y,Z)$ the adapted solution of the following BSDE,
$$\left\{\ba
\ds dY(t) &=\big[A(t)Y(t)+B(t)u(t)+C(t)Z(t)\big]dt+Z(t)dW(t), \q t\in[0,T],\\
\ns\ds Y(T) &=0,
\ea\right.$$
and denote by $(Y^\e,Z^\e)$ the adapted solution of
$$\left\{\ba
\ds dY^\e(t) &=\big[A(t)Y^\e(t)+B(t)[u^*(t)+\e u(t)]+C(t)Z^\e(t)+f(t)\big]dt+Z^\e(t)dW(t),\q t\in[0,T],\\
\ns\ds Y(T) &=\xi.
\ea\right.$$
By the uniqueness of the adapted solution of BSDEs, it is clearly that
$(Y^\e,Z^\e)=(Y^*+\e Y,Y^*+\e Z)$.
Therefore,
\begin{align}\label{3.3}
\ds & J(\xi;u^*+\e u)-J(\xi,u^*) \nonumber\\
\ns\ds &\q=2\e\dbE\Bigg\{\big\lan GY^*(0),Y(0)\big\ran+\big\lan g,Y(0)\big\ran \nonumber\\
\ns\ds &\q\hp{=\ } +\int_0^T\Bigg[
\llan\begin{pmatrix}Q(t)&S_1^\top(t)&S_2^\top(t)\\S_1(t)&R_{11}(t)&R_{12}(t)\\S_2(t)&R_{21}(t)&R_{22}(t) \end{pmatrix}
\begin{pmatrix}Y^*(t)\\Z^*(t)\\u^*(t)\end{pmatrix},
\begin{pmatrix}Y(t)\\Z(t)\\u(t)\end{pmatrix}\rran
 +\llan\begin{pmatrix}q(t)\\ \rho_1(t)\\ \rho_2(t) \end{pmatrix},
   \begin{pmatrix}Y(t)\\Z(t)\\u(t)\end{pmatrix}\rran\Bigg] ds\Bigg\} \nonumber\\
\ns\ds &\q\hp{=\ }+\e^2\dbE\Bigg\{\big\lan GY(0),Y(0)\big\ran
+\int_0^T\Bigg[
\llan\begin{pmatrix}Q(t)&S_1^\top(t)&S_2^\top(t)\\S_1(t)&R_{11}(t)&R_{12}(t)\\S_2(t)&R_{21}(t)&R_{22}(t) \end{pmatrix}
\begin{pmatrix}Y(t)\\Z(t)\\u(t)\end{pmatrix},
\begin{pmatrix}Y(t)\\Z(t)\\u(t)\end{pmatrix}\rran  \nonumber\\
\ns\ds &\q=2\e\dbE\Bigg\{\big\lan GY^*(0)+g,Y(0)\big\ran \nonumber\\
\ns\ds &\q\hp{=\ } +\int_0^T\Bigg[
\llan\begin{pmatrix}Q(t)&S_1^\top(t)&S_2^\top(t)\\S_1(t)&R_{11}(t)&R_{12}(t)\\S_2(t)&R_{21}(t)&R_{22}(t) \end{pmatrix}
\begin{pmatrix}Y^*(t)\\Z^*(t)\\u^*(t)\end{pmatrix},
\begin{pmatrix}Y(t)\\Z(t)\\u(t)\end{pmatrix}\rran
 +\llan\begin{pmatrix}q(t)\\ \rho_1(t)\\ \rho_2(t) \end{pmatrix},
   \begin{pmatrix}Y(t)\\Z(t)\\u(t)\end{pmatrix}\rran\Bigg] ds\Bigg\} \nonumber\\
\ns\ds &\q\hp{=\ } + \e^2 J^0(0;u).
\end{align}
Using the integration by parts formula to $\big\lan X^*,Y\big\ran$, we have
\begin{equation}\label{3.4}
\begin{aligned}
\ds -\big\lan GY^*(0)+g,Y(0)\big\ran
  &=\dbE\int_0^T\big[\lan QY^*+S^\top_1Z^*+S^\top_2u^*+q,Y\ran \\
\ns\ds &\hp{=\ } +\lan S_1Y^*+R_{11}Z^*+R_{12}u^*+\rho_1,Z\ran +\lan B^\top X^*,u\ran  \big]dt.
\end{aligned}
\end{equation}
Substitute \rf{3.4} in \rf{3.3} yields that
\begin{align*}
  J(\xi;u^*+\e u)-J(\xi,u^*)=\e^2J^0(0,u)
  +2\e\dbE\int_0^T \lan S_2Y^*+R_{21}Z^*-B^\top X^*+R_{22}u^*+\rho_2,u\ran dt.
\end{align*}
From the above, it is easy to see that \rf{3.5} holds if and only if \rf{3.2} holds and $J^0(0,u)\ges0$ for every $u\in\cU$.
\end{proof}

\section{Construction of optimal controls}

In this section we construct the optimal control of Problem (BSLQ) under the following uniform
positivity condition:
\begin{itemize}
  \item [\bf{(A3)}] There is a constant $\d>0$ such that
      \begin{equation}\label{A3}
        J^0(0;u)\ges\d\dbE\int_0^T|u(t)|^2dt,\q~\forall u\in\cU.
      \end{equation}
\end{itemize}
First, we observe that the uniform positivity condition (A3) implies $R_{22}\gg0$ (see Remark 5.4 of Sun--Wu--Xiong \cite{Sun-Wu-Xiong2021}).
Then, for simplicity presentation, we denote
\begin{equation}\label{4.21}
\begin{aligned}
\ds \mathscr{S}_{1} &= S_{1}-R_{12}R_{22}^{-1}S_{2},
&\mathscr{R}_{11}&=R_{11}-R_{12}R_{22}^{-1}R_{21}, \\
\ns\ds \mathscr{C}     &= C-B R_{22}^{-1} R_{21}, & v &=u+R_{22}^{-1} R_{21} Z.
\end{aligned}
\end{equation}
Using the notations \rf{4.21} and noting that $R_{22}\gg0$, it is easy to check that the original Problem (BSLQ) is equivalent to the following backward stochastic LQ problem with the state equation
\begin{equation}\label{4.22}\left\{\ba
\ds dY(t) &=[A(t) Y(t)+B(t) v(t)+\mathscr{C}(t) Z(t) +f(t)] d t+Z(t) d W(t),\q~t\in[0,T], \\
\ns\ds  Y(T) &=\xi,
\ea\right.\end{equation}
and the cost functional
\begin{align}\label{4.23}
\ds J\big(\xi;u\big)
&\deq\dbE\Bigg\{\big\lan GY(0),Y(0)\big\ran+2\big\lan g,Y(0)\big\ran \nonumber\\
\ns\ds &\hp{=\ } +\int_0^T\Bigg[
\llan\begin{pmatrix}Q(t)&\mathscr{S}_1^\top(t)&S_2^\top(t)\\\mathscr{S}_1(t)&\mathscr{R}_{11}(t)&0
\\S_2(t)&0&R_{22}(t) \end{pmatrix}
\begin{pmatrix}Y(t)\\Z(t)\\v(t)\end{pmatrix},
\begin{pmatrix}Y(t)\\Z(t)\\v(t)\end{pmatrix}\rran \\
\ns\ds &\hp{=\ }  +2\llan\begin{pmatrix}q(t)\\ \rho_1(t)\\ \rho_2(t) \end{pmatrix},
   \begin{pmatrix}Y(t)\\Z(t)\\u(t)\end{pmatrix}\rran\Bigg] dt\Bigg\}. \nonumber
\end{align}
Moreover, we let $H\in C([0,T];\dbS^n)$ be the unique solution of the following linear ordinary differential equation
\begin{equation*}\left\{\ba
\ds & \dot{H}(t)+H(t) A(t)+A(t)^{\top} H(t)+Q(t)=0, \q  t \in[0,T], \\
\ns\ds & H(0)=-G.
\ea\right.
\end{equation*}
Applying the integration by parts formula to $\lan HY,Y\ran$ on $[0,T]$, where $Y$ is the state process determined by \rf{4.22}, we have that
\begin{align*}
\ds &\dbE \lan H(T)\xi, \xi\ran  + \dbE\lan G Y(0), Y(0)\ran \\
\ns\ds &\q= \dbE  \int_{0}^{T}\left[\big\lan (\dot{H}+H A+A^{\top} H) Y, Y\big\ran
+2\lan B^{\top} H Y, v\ran+2\lan\mathscr{E}^{\top} H Y, Z\ran +2\lan HY,f\ran
+\lan H Z, Z\ran\right] d t \\
\ns\ds &\q=\dbE  \int_{0}^{T}\left[-\lan QY, Y\ran+2 \lan B^{\top} H Y, v \ran+2 \lan\mathscr{C}^{\top} H Y, Z \ran
+2\lan HY,f\ran+\lan H Z, Z\ran\right] d t \\
\ns\ds &\q=\dbE \int_{0}^{T}\left\{\left\lan\left(\begin{array}{ccc}-Q & H \mathscr{C} & H B \\ \mathscr{C}^{\top} H & H & 0 \\
B^{\top} H & 0 & 0
\end{array}\right)\left(\begin{array}{c}
Y \\
Z \\
v
\end{array}\right),\left(\begin{array}{l}
Y \\
Z \\
v
\end{array}\right)\right\ran +2\lan HY,f\ran \right\} dt .
\end{align*}
Substituting for the term $\dbE \lan G Y(0), Y(0)\ran$ in the cost functional \rf{4.23} yields that
\begin{align}
\ds J\big(\xi;u\big)=&~\dbE\Bigg\{-\lan H(T) \xi, \xi\ran+2\big\lan g,Y(0)\big\ran \nonumber\\
\ns\ds & +\int_0^T\Bigg[
\llan\begin{pmatrix}0&(\cS^H_1)^\top&(\cS^H_2)^\top\\ \cS^H_1&\cR^H_{11}&0
\\ \cS^H_2&0&R_{22} \end{pmatrix}
\begin{pmatrix}Y\\Z\\v\end{pmatrix},
\begin{pmatrix}Y\\Z\\v\end{pmatrix}\rran
+2\llan\begin{pmatrix}\ q^H\\ \rho_1\\ \rho_2 \end{pmatrix},
   \begin{pmatrix}Y\\Z\\u\end{pmatrix}\rran\Bigg] ds\Bigg\}, \nonumber
\end{align}
where
$$
\cS_{1}^{H}=\mathscr{S}_{1}+\mathscr{C}^{\top} H, \quad \cS_{2}^{H}=S_{2}+B^{\top} H, \quad \cR_{11}^{H}=\mathscr{R}_{11}+H,\quad  q^H= q +Hf.
$$
So, for a given terminal state $\xi$, minimizing $J(\xi;u)$ subject to \rf{state} is equivalent to minimizing the following cost functional
\begin{align*}
\ds J\big(\xi;u\big)=&~\dbE\Bigg\{2\big\lan g,Y(0)\big\ran\\
\ns\ds & +\int_0^T\Bigg[
\llan\begin{pmatrix}0&(\cS^H_1)^\top&(\cS^H_2)^\top\\ \cS^H_1&\cR^H_{11}&0
\\ \cS^H_2&0&R_{22} \end{pmatrix}
\begin{pmatrix}Y\\Z\\v\end{pmatrix},
\begin{pmatrix}Y\\Z\\v\end{pmatrix}\rran
+2\llan\begin{pmatrix}\tilde{q}\\ \rho_1\\ \rho_2 \end{pmatrix},
   \begin{pmatrix}Y\\Z\\u\end{pmatrix}\rran\Bigg] ds\Bigg\}, \nonumber
\end{align*}
subject to the state equation \rf{4.22},  which enables us to simplify Problem (BSLQ) firstly by assuming
\begin{align}\label{4.1}
  G=0,\q~ Q(t)=0,\q~ R_{12}=R^\top_{21}=0,\q~\forall t\in[0,T].
\end{align}
Therefore, in the rest of this section we would like to first discuss the case of \rf{4.1} holds, and then present the general result.
\subsection{The case of (\ref{4.1})}\label{subsec4.1}

Under the condition \rf{4.1}, the initial Problem (BSLQ) is equivalent to minimizing the following cost functional
\begin{equation}\label{4.2}
\begin{aligned}
\ds J\big(\xi;u\big)=&~\dbE\Bigg\{2\big\lan g,Y(0)\big\ran\\
\ns\ds& +\int_0^T\Bigg[
\llan\begin{pmatrix}0&S_1^\top(t)&S_2^\top(t)\\S_1(t)&R_{11}(t)&0\\S_2(t)&0&R_{22}(t) \end{pmatrix}
\begin{pmatrix}Y(t)\\Z(t)\\u(t)\end{pmatrix},
\begin{pmatrix}Y(t)\\Z(t)\\u(t)\end{pmatrix}\rran \\
\ns\ds & +2\llan\begin{pmatrix}q(t)\\ \rho_1(t)\\ \rho_2(t) \end{pmatrix},
   \begin{pmatrix}Y(t)\\Z(t)\\u(t)\end{pmatrix}\rran\Bigg] ds\Bigg\},
\end{aligned}
\end{equation}
subject to the initial state equation \rf{state}. Next, in order to construct the optimal control of Problem (BSLQ), we introduce the following Riccati equation
\begin{equation}\label{Riccati}
\left\{\begin{aligned}
\ds & \dot{\Si}(t) -A(t) \Si(t)-\Si(t) A(t)^{\top}+\mathcal{B}(t, \Si(t))\left[R_{22}(t)\right]^{-1} \mathcal{B}(t,\Si(t))^{\top} \\
\ns\ds &\hp{\dot{\Si}(t)} +\mathcal{C}(t, \Si(t))[\mathcal{R}(t, \Si(t))]^{-1} \Si(t) \mathcal{C}(t, \Si(t))^{\top}=0, \q t\in[0,T], \\
\ns\ds & \Si(T)=0 .
\end{aligned}\right.
\end{equation}
where $\Si:[0, T] \rightarrow \mathbb{S}^{n}$ is an $\mathbb{S}^{n}$-valued function, and
\begin{align*}
\ds \mathcal{B}(t, \Si(t)) &= B(t)+\Si(t) S_{2}(t)^{\top}, \\
\ns\ds \mathcal{C}(t, \Si(t)) &= C(t)+\Si(t) S_{1}(t)^{\top}, \\
\ns\ds \mathcal{R}(t, \Si(t)) &= I+\Si(t) R_{11}(t).
\end{align*}
When there is no risk for confusion, in the following for simplicity presentation, we would like to frequently suppress the argument $t$ from our notations and write $\mathcal{B}(t,\Si(t))$, $\mathcal{C}(t,\Si(t))$ and $\mathcal{R}(t,\Si(t))$ as $\mathcal{B}(\Si)$, $\mathcal{C}(\Si)$, and $\mathcal{R}(\Si)$, respectively. For Riccati equation \rf{Riccati}, we have the following result concerning the existence and uniqueness, which essentially is Theorem 6.2 of Sun--Wu--Xiong \cite{Sun-Wu-Xiong2021}.

\begin{proposition}
Let (A1)-(A3) and \rf{4.1} hold. Then the Riccati equation \rf{Riccati} admits a unique positive semidefinite solution $\Si \in C\left([0, T] ; \mathbb{S}^{n}\right)$ such that $\mathcal{R}(\Si)$ is invertible a.e. on $[0, T]$ and $\mathcal{R}(\Si)^{-1} \in L^{\infty}\left(0, T ; \mathbb{R}^{n}\right)$.
\end{proposition}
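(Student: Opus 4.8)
The plan is to obtain the statement as a (nearly verbatim) consequence of the homogeneous theory, in two steps. The first, and conceptually trivial, observation is that \emph{no nonhomogeneous datum enters either \rf{Riccati} or condition (A3)}: the Riccati equation \rf{Riccati} is assembled purely from $A,B,C,S_1,S_2,R_{11},R_{22}$, and $J^0(0;u)$ is by definition the cost of Problem (BSLQ)$^0$. Hence, as far as \rf{Riccati} is concerned, we may assume $f,g,q,\rho_1,\rho_2$ all vanish and work with Problem (BSLQ)$^0$ under \rf{4.1}. (Recall that (A3) already forces $R_{22}\gg0$, so $R_{22}(t)^{-1}$ in \rf{Riccati} is well defined and bounded.) In other words, the Proposition is exactly the Riccati well-posedness statement for the homogeneous problem, which is Theorem 6.2 of Sun--Wu--Xiong \cite{Sun-Wu-Xiong2021}, and it remains to recall why that holds.

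For the second step I would run the duality argument of Lim--Zhou, as adapted in \cite{Sun-Wu-Xiong2021}: pair the backward state equation \rf{state} (now with $f=0$) against a forward adjoint process $X$ satisfying a forward SDE with drift matrix $-A^{\top}$ and the initial condition $X(0)=g$ (here $g=0$). Applying integration by parts to $\lan X,Y\ran$ and using $Y(T)=\xi$ turns the running quadratic form of $(Y,Z,u)$ and the boundary term $2\dbE\lan g,Y(0)\ran$ into a quadratic functional of $(X,Z,u)$ in which $X$ is the state and $Z,u$ are the controls; one checks that (A3) is transported precisely into a uniform convexity condition of the form \rf{2.1} for this forward problem, and that --- after the transformation built into the definitions of $\mathcal{B}(\Si),\mathcal{C}(\Si),\mathcal{R}(\Si)$ --- the Riccati equation of the forward problem is \rf{Riccati}, with $\mathcal{R}(\Si)=I+\Si R_{11}$ playing the role of $\cR+\cD^{\top}\cP\cD$ (the unit $I$ reflecting the unit diffusion coefficient $Z\,dW$ in \rf{state}, and the $-A\Si-\Si A^{\top}$ terms the $-A^{\top}$ drift of $X$). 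The forward-problem Proposition of Section 2 that assumes only \rf{2.1} then delivers a unique $\cP\in C([0,T];\dbS^n)$ with $\cR+\cD^{\top}\cP\cD\gg0$; reading this back through the dictionary gives a unique $\Si\in C([0,T];\dbS^n)$ such that $\mathcal{R}(\Si)$ is invertible a.e.\ on $[0,T]$ with $\mathcal{R}(\Si)^{-1}\in L^{\i}(0,T;\dbR^{n\ts n})$, which is the assertion.

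Positive semidefiniteness of $\Si$ does not follow from mere solvability and needs a small extra argument: I would represent $\lan\Si(t)\eta,\eta\ran$ as the optimal value at $(t,\eta)$ of the forward LQ problem dual to Problem (BSLQ)$^0$, and observe that this value is $\ges 0$ because, after eliminating $u$ by completion of squares (legitimate since $R_{22}\gg0$), the remaining running cost is a nonnegative quadratic form --- which is exactly what condition (i) of Theorem \ref{Thm3.1}, namely $J^0(0;u)\ges0$, says. (Equivalently, $\Si$ may be obtained as the monotone limit of solutions to nondegenerate Riccati equations, each of which is $\ges0$.) Together with the a priori upper bound $\Si(t)\les CI_n$ coming from the boundedness of all the coefficients, this both keeps $\mathcal{R}(\Si)$ uniformly invertible and allows the local solution of \rf{Riccati} to be continued to all of $[0,T]$.

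The main obstacle is the bookkeeping of the second step: writing down the dual forward SDE and its cost explicitly, and checking line by line that (A3) becomes \rf{2.1} and that the associated forward Riccati equation is \rf{Riccati} in the nonstandard form above. None of this is new --- it is precisely the content of the relevant parts of \cite{Sun-Wu-Xiong2021,Lim-Zhou2001} --- so in the write-up I would simply cite Theorem 6.2 of \cite{Sun-Wu-Xiong2021} after the one-line remark of the first paragraph, rather than reproduce the duality computation.
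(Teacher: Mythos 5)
Your proposal is correct and follows essentially the same route as the paper: the paper gives no independent proof but simply observes that the statement "essentially is Theorem 6.2 of Sun--Wu--Xiong," which is exactly your first-paragraph reduction (neither \rf{Riccati} nor (A3) involves the nonhomogeneous data, and $J^0$ is by definition the homogeneous cost). Your additional sketch of the duality argument behind the cited theorem is a reasonable reconstruction but goes beyond what the paper records.
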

With the solution $\Si$ to the Riccati equation \rf{Riccati}, before constructing the optimal control of Problem (BSLQ), we further introduce the following linear BSDE:
\begin{equation}\label{4.3}\left\{\begin{aligned}
\ds d\varphi(t) &= \a(t,\Si(t)) d t+\beta(t) d W(t), \quad t \in[0, T], \\
\ns\ds  \varphi(T) &= \xi,
\end{aligned}\right.
\end{equation}
where
\begin{equation}\label{4.9}
\begin{aligned}
\ds \a(\Si)=&~ \Big\{ \left[A-\mathcal{B}(\Si) R_{22}^{-1} S_{2}-\mathcal{C}(\Si) \mathcal{R}(\Si)^{-1} \Si S_{1}\right] \varphi+\mathcal{C}(\Si) \mathcal{R}(\Si)^{-1} \beta \\
\ns\ds &~ -C \mathcal{R}(\Si)^{-1}\Si\rho_1
-\Si S_{1}^{\top} \mathcal{R}(\Si)^{-1} \Si\rho_1
-BR_{22}^{-1}\rho_2
-\Si S_{2}^{\top} R_{22}^{-1}\rho_2+\Si q+f \Big\} \\
\ns\ds =&~ \Big\{ \left[A-\mathcal{B}(\Si) R_{22}^{-1} S_{2}-\mathcal{C}(\Si) \mathcal{R}(\Si)^{-1} \Si S_{1}\right] \varphi+\mathcal{C}(\Si) \mathcal{R}(\Si)^{-1} \beta \\
\ns\ds &~ -\cC(\Si) \mathcal{R}(\Si)^{-1}\Si\rho_1-\cB(\Si) R_{22}^{-1}\rho_2+\Si q+f \Big\}.
\end{aligned}
\end{equation}
In terms of the solution $\Si$ to the Riccati equation \rf{Riccati} and the adapted solution
$(\f,\b)$ to the BSDE \rf{4.3}, we now can construct the optimal control of Problem (BSLQ) as
follows.
\begin{theorem}\label{Thm4.2}
Let (A1)-(A3) and \rf{4.1} hold. Let $(\f,\b)$ be the adapted solution to the
BSDE \rf{4.3} and $X$ the solution to the following SDE:
\begin{equation}\label{4.4}\left\{\ba
\ds dX(t) &=\Big\{\big[S_{1}^{\top} \mathcal{R}(\Si)^{-1} \Si\mathcal{C}(\Si)^{\top}+S_{2}^{\top} R_{22}^{-1} \mathcal{B}(\Si)^{\top}-A^{\top}\big] X\\
\ns\ds &\hp{=\ } -\big[S_{1}^{\top} \mathcal{R}(\Si)^{-1}\Si S_{1}+S_{2}^{\top}R_{22}^{-1}S_{2}\big]\varphi
+S_{1}^{\top} \cR(\Si)^{-1} \beta-S_{1}^{\top} \cR(\Si)^{-1} \Si\rho_1-S_{2}^{\top} R_{22}^{-1}\rho_2+q \Big\} d t \\
\ns\ds &\hp{=\ } - \big[\cR(\Si)^{-1}\big]^{\top} \big[\mathcal{C}(\Si)^{\top} X-S_{1} \varphi-R_{11} \beta -\rho_1\big]  d W(t), \\
\ns\ds X(0)&= g .
\end{aligned}\right.
\end{equation}
Then the optimal control of Problem (BSLQ) for the terminal state $\xi$ is given by
\begin{equation}\label{4.5}
u(t)=\left[R_{22}(t)\right]^{-1}\big[\mathcal{B}(t, \Si(t))^{\top} X(t)-S_{2}(t) \varphi(t)
-\rho_2(t) \big], \quad t \in[0, T] .
\end{equation}
\end{theorem}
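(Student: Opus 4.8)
The plan is to verify the two conditions of Theorem~\ref{Thm3.1} for the control $u$ defined by \rf{4.5}, working in the simplified setting \rf{4.1} where $G=0$, $Q=0$, $R_{12}=R_{21}^\top=0$. Condition (i), namely $J^0(0;u)\ges0$ for all $u\in\cU$, is immediate from the uniform positivity assumption (A3), so the entire burden is to produce the adapted triple $(X^*,Y^*,Z^*)$ solving the decoupled FBSDE of Theorem~\ref{Thm3.1} and to check the stationarity relation \rf{3.2}. The natural guess is that $X^*=X$ (the solution of \rf{4.4}) and that $Y^*,Z^*$ should be expressed through the Riccati variable $\Si$, the forward process $X$, and the BSDE data $(\f,\b)$. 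Specifically, I would \emph{posit the ansatz}
\begin{equation*}
Y^*(t)=\Si(t)X(t)+\f(t),\qquad Z^*(t)=\cR(\Si(t))^{-1}\big[\mathscr C(\Si(t))^\top X(t)-S_1(t)\f(t)-R_{11}(t)\b(t)-\rho_1(t)\big]^{\text{(up to sign)}}+\text{(lower-order terms)},
\end{equation*}
reading off the precise form of $Z^*$ from the $dW$-term produced when one applies It\^o's formula to $\Si X+\f$ and matches it against the diffusion coefficient $Z^*$ demanded by the $Y^*$-equation. The guess for $u$ in \rf{4.5} is then exactly what the stationarity condition \rf{3.2} forces once $Y^*=\Si X+\f$ is substituted (recall $R_{21}=0$ and $R_{22}\gg0$ under \rf{4.1}), so \rf{3.2} will hold essentially by construction.

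The verification proceeds in three steps. \textbf{Step 1:} Differentiate $Y^*(t)=\Si(t)X(t)+\f(t)$ using the product rule, inserting the Riccati equation \rf{Riccati} for $\dot\Si$, the dynamics \rf{4.4} for $dX$, and the dynamics \rf{4.3} for $d\f$; collect the $dt$- and $dW$-parts. The $dW$-coefficient, call it $\widetilde Z$, defines what $Z^*$ must be; one then checks that $\widetilde Z$ coincides with the expression for $Z^*$ used implicitly in \rf{4.4} and \rf{4.5} (i.e.\ $-\cR(\Si)^{-1}[\mathscr C(\Si)^\top X-S_1\f-R_{11}\b-\rho_1]$, matching the diffusion term in the $X$-equation). \textbf{Step 2:} Verify that the $dt$-coefficient of $d(\Si X+\f)$ equals $A Y^*+Bu+\mathscr C Z^*+f$ (in the $(\mathscr S_1,\mathscr R_{11},\mathscr C)$-reduced formulation \rf{4.22}), using the definitions \rf{4.21} of $\mathscr S_1,\mathscr R_{11},\mathscr C,v$; this is the core algebraic identity and is where the quadratic term $\mathcal C(\Si)\mathcal R(\Si)^{-1}\Si\mathcal C(\Si)^\top$ in the Riccati equation gets consumed. \textbf{Step 3:} Check the boundary conditions: $X(0)=g=GY^*(0)+g$ holds because $G=0$; and $Y^*(T)=\Si(T)X(T)+\f(T)=0\cdot X(T)+\xi=\xi$ since $\Si(T)=0$ and $\f(T)=\xi$. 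The forward $X$-equation of Theorem~\ref{Thm3.1} with $G=Q=0$, $R_{12}=R_{21}=0$ must also be matched against \rf{4.4}; after substituting $Y^*=\Si X+\f$ and the expression for $Z^*$, this reduces to the identity already established in Step~1, with the coefficient $S_1^\top\cR(\Si)^{-1}\Si\mathcal C(\Si)^\top+S_2^\top R_{22}^{-1}\mathcal B(\Si)^\top-A^\top$ appearing exactly as in \rf{4.4}.

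The main obstacle is Step~2: it is a lengthy but purely mechanical verification that, after substituting the ansatz for $Y^*$ and $Z^*$ into the reduced state equation \rf{4.22}, every term not accounted for by $A Y^*+Bv+\mathscr C Z^*+f$ is exactly cancelled by the Riccati equation \rf{Riccati} for $\dot\Si$ and the drift $\a(\Si)$ in \rf{4.3} — in particular the two quadratic-in-$\Si$ terms $\mathcal B(\Si)R_{22}^{-1}\mathcal B(\Si)^\top$ and $\mathcal C(\Si)\mathcal R(\Si)^{-1}\Si\mathcal C(\Si)^\top$, together with the nonhomogeneous pieces $-\mathcal C(\Si)\mathcal R(\Si)^{-1}\Si\rho_1-\mathcal B(\Si)R_{22}^{-1}\rho_2+\Si q+f$, must all reorganize correctly. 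One should be careful about the placement of $\cR(\Si)^{-1}$ versus $[\cR(\Si)^{-1}]^\top$ (since $\cR(\Si)=I+\Si R_{11}$ is not symmetric) and about the identity $\mathcal C(\Si)\cR(\Si)^{-1}\Si=\Si\mathcal C(\Si)^\top{}^{?}$-type manipulations, which rely on $\Si$ being symmetric and on the algebraic relation between $\mathcal C(\Si)$ and $S_1$. Once \rf{3.2} is confirmed and the FBSDE is solved by $(X,\Si X+\f,Z^*)$, Theorem~\ref{Thm3.1} delivers optimality of $u$ in \rf{4.5}.
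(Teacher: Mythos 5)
Your overall strategy is exactly the paper's: posit $X^*=X$, express $(Y^*,Z^*)$ through $\Si$, $X$ and $(\f,\b)$, verify by It\^o's formula that the triple solves the decoupled FBSDE of Theorem~\ref{Thm3.1}, check the stationarity relation \rf{3.2} (which under \rf{4.1} reduces to $S_2Y^*-B^\top X+R_{22}u+\rho_2=0$), and note that condition (i) is supplied by (A3). The boundary-condition observations ($G=0$, $\Si(T)=0$) are also as in the paper.

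However, there is a concrete sign error in your ansatz that breaks the argument as stated: the correct definition is $Y^*=-\Si X+\f$ (the paper's \rf{4.6}), not $Y^*=\Si X+\f$. The sign is not a free convention here, because \rf{4.4}, \rf{4.3}, \rf{Riccati} and \rf{4.5} are all fixed. Indeed, since $\cB(\Si)^\top=B^\top+S_2\Si$, the control \rf{4.5} satisfies $R_{22}u=B^\top X+S_2\Si X-S_2\f-\rho_2=B^\top X-S_2(-\Si X+\f)-\rho_2$, so \rf{3.2} holds precisely when $Y^*=-\Si X+\f$; with your $+$ sign the stationarity condition would instead force $u=R_{22}^{-1}[B^\top X-S_2\Si X-S_2\f-\rho_2]$, which differs from \rf{4.5} by $2R_{22}^{-1}S_2\Si X$, so your claim that \rf{3.2} holds ``essentially by construction'' fails. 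The same sign is needed to match the diffusion coefficient of \rf{4.4} against $-C^\top X+S_1Y^*+R_{11}Z^*+\rho_1=-\big(C^\top+S_1\Si\big)X+\cds=-\cC(\Si)^\top X+\cds$, and to make the quadratic terms $\cB(\Si)R_{22}^{-1}\cB(\Si)^\top$ and $\cC(\Si)\cR(\Si)^{-1}\Si\cC(\Si)^\top$ of \rf{Riccati} cancel in your Step 2 (the resulting $Z^*$ is $\cR(\Si)^{-1}[\Si\cC(\Si)^\top X-\Si S_1\f-\Si\rho_1+\b]$, the paper's \rf{4.7}, not the diffusion term of the $X$-equation). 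Since you commit to the $+$ sign throughout, including in the boundary checks and the concluding sentence, this must be corrected before the verification can go through; with $Y^*=-\Si X+\f$ your three steps reproduce the paper's proof.
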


\begin{proof}
Let us define for $t\in[0,T]$,
\begin{align}
\ds    Y(t) &= -\Si(t)X(t)+\f(t), \label{4.6}\\
\ns\ds Z(t) &=\cR(t,\Si(t))^{-1}\big[\Si(t)\cC(t,\Si(t))^{\top} X(t)-\Si(t) S_{1}(t)\varphi(t) -\Si(t)\rho_1(t) +\beta(t)\big].  \label{4.7}
\end{align}
We observe that
\begin{align}
\ds R_{22}u &= \mathcal{B}(\Si)^{\top} X-S_{2} \varphi -\rho_2 \nonumber\\
\ns\ds &= (B+\Si S_{2}^{\top})^\top X-S_{2} \varphi -\rho_2 \nonumber\\
\ns\ds &= B^{\top} X +S_{2}(\Si X- \varphi) -\rho_2 \nonumber\\
\ns\ds &= B^{\top} X -S_{2}Y -\rho_2. \label{4.8}
\end{align}
Furthermore, using \rf{4.5} and \rf{4.7} we obtain
\begin{align*}
\ds &S_{1}^{\top} Z+S_{2}^{\top} u\\
\ns\ds &= S_{1}^{\top} \mathcal{R}(\Si)^{-1}\big[\Si\mathcal{C}(\Si)^{\top} X-\Si S_{1} \varphi -\Si\rho_1 +\beta\big]
+S_{2}^{\top} R_{22}^{-1} \mathcal{B}(\Si)^{\top} X
-S_{2}^{\top} R_{22}^{-1} S_{2} \varphi-S_{2}^{\top} R_{22}^{-1}\rho_2 \\
\ns\ds &=\big[S_{1}^{\top} \mathcal{R}(\Si)^{-1} \Si \mathcal{C}(\Si)^{\top}+S_{2}^{\top} R_{22}^{-1} \mathcal{B}(\Si)^{\top}\big] X
-\big[S_{1}^{\top} \mathcal{R}(\Si)^{-1} \Si S_{1}+S_{2}^{\top} R_{22}^{-1} S_{2}\big]\varphi\\
\ns\ds &\q +S_{1}^{\top} \mathcal{R}(\Si)^{-1} \beta-S_{1}^{\top} \mathcal{R}(\Si)^{-1} \Si\rho_1-S_{2}^{\top} R_{22}^{-1}\rho_2,
\end{align*}
from which it implies that
\begin{equation}\label{4.0}\begin{aligned}
-A^\top X+S_{1}^{\top} Z+S_{2}^{\top} u +q
&=\big[S_{1}^{\top} \mathcal{R}(\Si)^{-1} \Si \mathcal{C}(\Si)^{\top}+S_{2}^{\top} R_{22}^{-1} \mathcal{B}(\Si)^{\top}-A^\top\big] X \\
\ns\ds &\q -\big[S_{1}^{\top} \mathcal{R}(\Si)^{-1} \Si S_{1}+S_{2}^{\top} R_{22}^{-1} S_{2}\big] \varphi
+S_{1}^{\top} \mathcal{R}(\Si)^{-1} \beta\\
\ns\ds &\q -S_{1}^{\top} \mathcal{R}(\Si)^{-1} \Si\rho_1
-S_{2}^{\top} R_{22}^{-1}\rho_2+q.
\end{aligned}\end{equation}
Similarly, using \rf{4.6} and \rf{4.7} we obtain
\begin{equation*}\begin{aligned}
\ds -C^{\top} X+S_{1} Y+R_{11} Z +\rho_1
=&-\big(C^{\top}+S_{1} \Si\big) X+S_{1} \varphi+R_{11} Z+\rho_1 \\
%
%
\ns\ds =&\left[R_{11} \mathcal{R}(\Si)^{-1} \Si-I\right] \mathcal{C}(\Si)^{\top} X+\left[I-R_{11} \mathcal{R}(\Si)^{-1} \Si\right] S_{1} \varphi \\
\ns\ds &+R_{11} \mathcal{R}(\Si)^{-1} \beta  +(I-R_{11} \mathcal{R}(\Si)^{-1}\Si)\rho_1.
\end{aligned}\end{equation*}
Note that
\begin{equation*}
\begin{array}{l}
\ds R_{11} \mathcal{R}(\Si)^{-1}=R_{11}\left(I+\Si R_{11}\right)^{-1}
=\left(I+R_{11} \Si\right)^{-1} R_{11}, \\
\ns\ds I-R_{11} \mathcal{R}(\Si)^{-1} \Si=\left(I+R_{11} \Si\right)^{-1}=\left[\mathcal{R}(\Si)^{-1}\right]^{\top},
\end{array}
\end{equation*}
we further obtain that
\begin{equation}\label{4.10}
-C^{\top} X+S_{1} Y+R_{11} Z  +\rho_1 =
-\big[\mathcal{R}(\Si)^{-1}\big]^{\top}\big[\mathcal{C}(\Si)^{\top} X-S_{1} \varphi-R_{11}\beta -\rho_1\big].
\end{equation}
This implies that the solution of \rf{4.4} satisfies the following equation
\begin{equation}\label{4.11}\left\{\ba
\ds d X(t) &= \big(-A^{\top} X+S_{1}^{\top} Z+S_{2}^{\top} u +q\big) d t+\big(-C^{\top} X+S_{1} Y+R_{11} Z +\rho_1\big) d W, \\
\ns\ds X(0) &= g.
\ea\right.
\end{equation}
Applying It\^{o}'s formula to \rf{4.6}, we have
$$\begin{aligned}
\ds dY &=-\dot{\Si} X d t-\Si d X+d \varphi \\
\ns\ds &=\big[\alpha(\Si)-\dot{\Si} X-\Si \big(-A^{\top} X+S_{1}^{\top} Z+S_{2}^{\top} u +q \big)\big] d t\\
\ns\ds &\hp{=\ } +\big[\beta-\Si\big(-C^{\top} X+S_{1} Y+R_{11} Z +\rho_1 \big)\big] d W.
\end{aligned}$$
Using \rf{4.0} and \rf{4.9}, and note that $\Si$ satisfies Riccati equation \rf{Riccati}, we obtain
$$
\begin{aligned}
\ds & \alpha(\Si)- \dot{\Si} X-\Si\big(-A^{\top} X+S_{1}^{\top} Z+S_{2}^{\top} u +q \big) \\
\ns\ds &= \alpha(\Si)-\big[\dot{\Si}-\Si A^{\top}+\Si S_{1}^{\top} \mathcal{R}(\Si)^{-1} \Si \mathcal{C}(\Si)^{\top}+\Si S_{2}^{\top} R_{22}^{-1} \mathcal{B}(\Si)^{\top}\big] X \\
\ns\ds &\q +\Si \big[S_{1}^{\top} \mathcal{R}(\Si)^{-1} \Si S_{1}+S_{2}^{\top}
 R_{22}^{-1} S_{2}\big] \varphi-\Si S_{1}^{\top} \mathcal{R}(\Si)^{-1}\beta\\
\ns\ds &\q +\Si S_{1}^{\top} \mathcal{R}(\Si)^{-1} \Si\rho_1
+\Si S_{2}^{\top} R_{22}^{-1}\rho_2-\Si q \\
\ns\ds &= \alpha(\Si)-\big[A \Si-C \mathcal{R}(\Si)^{-1} \Si \mathcal{C}(\Si)^{\top}-B R_{22}^{-1} \mathcal{B}(\Si)^{\top}\big] X \\
\ns\ds &\q +\Si\big[S_{1}^{\top} \mathcal{R}(\Si)^{-1} \Si S_{1}+S_{2}^{\top} R_{22}^{-1} S_{2}\big] \varphi-\Si S_{1}^{\top} \mathcal{R}(\Si)^{-1} \beta\\
\ns\ds &\q +\Si S_{1}^{\top} \mathcal{R}(\Si)^{-1} \Si\rho_1
+\Si S_{2}^{\top} R_{22}^{-1}\rho_2-\Si q \\
\ns\ds &= A Y+\big[C \mathcal{R}(\Si)^{-1} \Si \mathcal{C}(\Si)^{\top}+B R_{22}^{-1} \mathcal{B}(\Si)^{\top}\big] X-B R_{22}^{-1} S_{2} \varphi \\
\ns\ds &\q -C \mathcal{R}(\Si)^{-1}(\Si S_{1} \varphi-\beta)
  -C \mathcal{R}(\Si)^{-1}\Si(t)\rho_1(t)
  -BR_{22}^{-1}\rho_2 +f \\
\ns\ds & = A Y+B R_{22}^{-1}\big[\mathcal{B}(\Si)^{\top} X-S_{2} \varphi -\rho_2\big]+
C \mathcal{R}(\Si)^{-1}\big[\Si \mathcal{C}(\Si)^{\top} X-\Si S_{1} \varphi
-\Si(t)\rho_1(t)
+\beta\big]
+f \\
\ns\ds &= A Y+B u+C Z +f.
\end{aligned}
$$
Similarly, using \rf{4.10} and the following relation
\begin{align*}
\ds \Si\left[\mathcal{R}(\Si)^{-1}\right]^{\top} &=\Si\left(I+R_{11} \Si\right)^{-1}=\left(I+\Si R_{11}\right)^{-1} \Si=\mathcal{R}(\Si)^{-1} \Si, \\
\ns\ds I-\mathcal{R}(\Si)^{-1} \Si R_{11} &=I-\left(I+\Si R_{11}\right)^{-1} \Si R_{11}=\left(I+\Si R_{11}\right)^{-1}=\mathcal{R}(\Si)^{-1},
\end{align*}
we have that
\begin{align*}
\ds & \beta-\Si \big(-C^{\top} X+S_{1} Y+R_{11} Z +\rho_1\big) \\
\ns\ds &=\beta+\Si\left[\mathcal{R}(\Si)^{-1}\right]^{\top}\big[\mathcal{C}(\Si)^{\top} X-S_{1} \varphi-R_{11} \beta  -\rho_1 \big] \\
\ns\ds &=\mathcal{R}(\Si)^{-1}\big[\Si \mathcal{C}(\Si)^{\top} X-\Si S_{1} \varphi
 -\Si\rho_1 \big]+\big[I-\mathcal{R}(\Si)^{-1} \Si R_{11}\big] \beta \\
\ns\ds &=\mathcal{R}(\Si)^{-1}\big[\Si \mathcal{C}(\Si)^{\top} X-\Si S_{1} \varphi
 -\Si\rho_1+ \beta \big]\\
\ns\ds &=Z.
\end{align*}
Therefore, the pair $(Y,Z)$ defined by \rf{4.6} and \rf{4.7} satisfies the following BSDE:
\begin{equation}\label{4.12}\left\{\ba
\ds dY(t) &=(A Y+B u+C Z+ f ) d t+Z d W, \\
\ns\ds  Y(T) &=\xi.
\ea\right.
\end{equation}
Combining \rf{4.11} and \rf{4.12}, we see that the solution $X$ of \rf{4.4}, the pair $(Y,Z)$
defined by \rf{4.6} and \rf{4.7} satisfy the following FBSDE
\begin{equation}\label{FBSDE}\left\{\ba
\ds dX(t) &= (-A^{\top} X+S_{1}^{\top} Z+S_{2}^{\top} u +q  ) d t
+ (-C^{\top} X+S_{1} Y+R_{11} Z +\rho_1  ) d W, \\
\ns\ds dY(t) &=(A Y+B u+C Z +f ) d t+Z d W, \\
\ns\ds X(0) &=g, \q Y(T)=\xi.
\ea\right.
\end{equation}
In addition, combining \rf{4.8}, we have that the control $u$ defined by \rf{4.5} satisfies the following condition
\begin{equation}\label{4.13}
S_{2} Y-B^{\top} X+R_{22} u +\rho_2=0.
\end{equation}
Therefore, from Theorem \ref{Thm3.1}, we obtain that $u$ is the (unique) optimal control for the terminal state $\xi$.
\end{proof}

We conclude this section with a representation of the value function $V(\xi)$.

\begin{theorem}
Let (A1)-(A3) and \rf{4.1} hold. Then the value function of Problem (BSLQ) is given by
\begin{equation}\begin{aligned}
\ns\ds V(\xi)=&\ \dbE \bigg\{2\lan  \f(0),g\ran - \lan  \Si(0)g, g\ran\\
\ns\ds & +\int_{0}^{T} \Big[
 -\left\lan \mathcal{R}(\Si)^{-1}\Si\rho_1, \rho_1 \right\ran
 -\left\lan R_{22}^{-1}\rho_2,\rho_2 \right\ran
 +2\left\lan \mathcal{R}(\Si)^{-1}\beta, \rho_1 \right\ran \\
\ns\ds &+\left\lan  R_{11} \mathcal{R}(\Si)^{-1} \beta, \beta\right\ran
+2 \big\lan  S_{1}^{\top} \mathcal{R}(\Si)^{-1} \beta
-S_{1}^{\top} \mathcal{R}(\Si)^{-1} \Si\rho_1-S_{2}^{\top} R_{22}^{-1}\rho_2+q, \varphi \big\ran \\
\ns\ds &-\big\lan  [S_{1}^{\top} \mathcal{R}(\Si)^{-1} \Si S_{1}+S_{2}^{\top} R_{22}^{-1} S_{2} ] \varphi, \varphi\big\ran \Big] d t\bigg\}, \label{4.14}
\end{aligned}\end{equation}
where $(\f,\b)$ is the adapted solution of the BSDE \rf{4.3}.
\end{theorem}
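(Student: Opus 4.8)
The plan is to evaluate $V(\xi)=J(\xi;u)$ directly on the optimal control $u$ from \rf{4.5} constructed in Theorem \ref{Thm4.2}, together with the associated $X$ of \rf{4.4} and the pair $(Y,Z)$ of \rf{4.6}--\rf{4.7}; recall from the proof of Theorem \ref{Thm4.2} that $(X,Y,Z)$ solves the FBSDE \rf{FBSDE} and satisfies the pointwise relation \rf{4.13}. Under \rf{4.1} the cost is $J(\xi;u)=2\dbE\lan g,Y(0)\ran+\dbE\int_0^T\ell\,dt$, where $\ell$ is the integrand of \rf{4.2}, i.e.\ its quadratic part $\ell_2=2\lan S_1Y,Z\ran+2\lan S_2Y,u\ran+\lan R_{11}Z,Z\ran+\lan R_{22}u,u\ran$ plus $2(\lan q,Y\ran+\lan\rho_1,Z\ran+\lan\rho_2,u\ran)$. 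First I would apply the integration-by-parts formula to $\lan X,Y\ran$ on $[0,T]$ via \rf{4.11}--\rf{4.12}: the skew contributions $-\lan A^\top X,Y\ran+\lan X,AY\ran$ and $\lan X,CZ\ran-\lan C^\top X,Z\ran$ vanish, and substituting $B^\top X=S_2Y+R_{22}u+\rho_2$ from \rf{4.13} turns the drift of $d\lan X,Y\ran$ into $\ell_2+\lan q,Y\ran+\lan\rho_1,Z\ran+\lan\rho_2,u\ran+\lan f,X\ran$. Taking expectations and using $X(0)=g$, $Y(T)=\xi$, $Y(0)=-\Si(0)g+\f(0)$, this gives
\begin{equation*}
J(\xi;u)=\dbE\lan X(T),\xi\ran-\lan\Si(0)g,g\ran+\lan\f(0),g\ran
+\dbE\int_0^T\big[\lan q,Y\ran+\lan\rho_1,Z\ran+\lan\rho_2,u\ran-\lan f,X\ran\big]dt.
\end{equation*}

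It remains to handle the endpoint term $\dbE\lan X(T),\xi\ran$. Since $\xi=\f(T)$ by \rf{4.3}, I would apply the integration-by-parts formula a second time, to $\lan X,\f\ran$ on $[0,T]$, using \rf{4.4} for $dX$ and \rf{4.3}, \rf{4.9} for $d\f$; the cancellation $-\lan A^\top X,\f\ran+\lan X,A\f\ran=0$ (the $A\f$ term inside $\a(\Si)$) is the only one needed here. This produces $\dbE\lan X(T),\xi\ran=\lan g,\f(0)\ran$ plus an explicit drift integral in $X,\f,\b,\rho_1,\rho_2,q,f$.

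Next I would add the two drift integrands, substitute $Y=-\Si X+\f$ together with the expressions \rf{4.7} for $Z$ and \rf{4.5} for $u$ into the term $\lan q,Y\ran+\lan\rho_1,Z\ran+\lan\rho_2,u\ran$, and simplify. The facts used are the symmetry of $\Si,R_{11},R_{22}$, the definitions $\cB(\Si)=B+\Si S_2^\top$, $\cC(\Si)=C+\Si S_1^\top$, $\cR(\Si)=I+\Si R_{11}$, and the identities $\Si[\cR(\Si)^{-1}]^\top=\cR(\Si)^{-1}\Si=[\cR(\Si)^{-1}\Si]^\top$, $I-\cR(\Si)^{-1}\Si R_{11}=\cR(\Si)^{-1}$, and $\cR(\Si)^{-1}(I+\Si R_{11})=I$, all recorded in the proof of Theorem \ref{Thm4.2}. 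Every term carrying the \emph{process} $X$ cancels---necessarily so, since $V(\xi)$ can depend only on $\xi$ and the problem data, not on the adjoint trajectory---and what survives, together with $\lan\f(0),g\ran+\lan g,\f(0)\ran=2\lan\f(0),g\ran$ and $-\lan\Si(0)g,g\ran$, is exactly the right-hand side of \rf{4.14}. (The estimate of Lemma \ref{estimate} and the analogous bounds for $X$ and $(\f,\b)$ make all the stochastic integrals true martingales, so both integrations by parts are valid.)

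The step I expect to be the main obstacle is this last one: one must check that the many cross terms in $\f,\b,\rho_1,\rho_2,q$ coalesce, after substitution, into precisely the stated integrand $-\lan\cR(\Si)^{-1}\Si\rho_1,\rho_1\ran-\lan R_{22}^{-1}\rho_2,\rho_2\ran+2\lan\cR(\Si)^{-1}\b,\rho_1\ran+\cdots$. The systematic vanishing of all $X$-dependent terms is the key structural fact that keeps the computation organized and serves as a running consistency check on the algebra.
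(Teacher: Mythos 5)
Your proposal is correct and follows essentially the same route as the paper's proof: the same two integrations by parts (first on $\lan X,Y\ran$ using the FBSDE \rf{FBSDE} and the stationarity relation \rf{4.13}, then on $\lan X,\f\ran$ to resolve the endpoint term $\dbE\lan X(T),\xi\ran=\dbE\lan X(T),\f(T)\ran$), followed by substituting the representations $Y=-\Si X+\f$, \rf{4.7}, \rf{4.5} so that all $X$-dependent terms cancel. Your intermediate identity for $J(\xi;u)$ agrees with the paper's, and the cancellation mechanism you describe is exactly the one the paper implements via the identities $\lan X,\Si q\ran=-\lan Y,q\ran+\lan\f,q\ran$ and its analogues for the $\rho_1,\rho_2$ terms.
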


\begin{proof}
Let $u$ be the optimal control for the terminal state $\xi$. Then, by Theorem \ref{Thm3.1}, the
adapted solution $(X,Y,Z)$ of \rf{FBSDE} satisfies \rf{4.13}. By the definition, we observe that
\begin{align*}
\ds V(\xi) &= J\big(\xi;u\big)=\dbE\Bigg\{2\big\lan g,Y(0)\big\ran\\
\ns\ds &\q +\int_0^T\Bigg[
\llan\begin{pmatrix}0&S_1^\top(t)&S_2^\top(t)\\S_1(t)&R_{11}(t)&0\\S_2(t)&0&R_{22}(t) \end{pmatrix}
\begin{pmatrix}Y(t)\\Z(t)\\u(t)\end{pmatrix},
\begin{pmatrix}Y(t)\\Z(t)\\u(t)\end{pmatrix}\rran
+2\llan\begin{pmatrix}q(t)\\ \rho_1(t)\\ \rho_2(t) \end{pmatrix},
   \begin{pmatrix}Y(t)\\Z(t)\\u(t)\end{pmatrix}\rran\Bigg] dt\Bigg\}\\
\ns\ds &= \dbE  \bigg\{2\big\lan g,Y(0)\big\ran+\int_{0}^{T}\big[2\left\lan  S_{1} Y, Z\right\ran +2\left\lan  S_{2} Y, u\right\ran +\left\lan  R_{11} Z, Z\right\ran +\left\lan  R_{22} u, u\right\ran \\
\ns\ds &\q
+2\left\lan  q,Y\right\ran  + 2\left\lan  \rho_1,Z\right\ran  + 2\left\lan  \rho_2,u\right\ran  \big] d t \bigg\}\\
\ns\ds &= \dbE  \bigg\{2\big\lan g,Y(0)\big\ran+
\int_{0}^{T}\big[\lan  S_{1}^{\top} Z+S_{2}^{\top} u, Y\ran +\lan  S_{1} Y+R_{11} Z, Z\ran +\lan  S_{2} Y+R_{22} u, u\ran \\
\ns\ds &\q
+2\left\lan  q,Y\right\ran + 2\lan\rho_1,Z\ran  + 2\lan\rho_2,u\ran\big] dt \bigg\}\\
\ns\ds &= \dbE  \bigg\{\big\lan 2g,Y(0)\big\ran+
\int_{0}^{T}\big[\lan  S_{1}^{\top} Z+S_{2}^{\top} u +2q, Y\ran +\lan  S_{1} Y+R_{11} Z +2\rho_1, Z\ran +\lan   B^{\top} X +\rho_2, u\ran
\big] d t \bigg\}.
\end{align*}
Take the integration by parts formula to $\left\lan X,Y\right\ran$ implies that
\begin{align*}
\ds &\dbE \lan  X(T), Y(T)\ran \\
\ns\ds &= \dbE \lan  g, Y(0)\ran
  +\dbE  \int_{0}^{T}\big[\lan  X, A Y+B u+C Z +f \ran
  +\lan -A^{\top} X+S_{1}^{\top} Z+S_{2}^{\top} u +q, Y\ran \\
\ns\ds & \q +\lan -C^{\top} X+S_{1} Y+R_{11} Z +\rho_1, Z\ran \big] d t \\
\ns\ds &=  \dbE \lan  g, Y(0)\ran
+\dbE  \int_{0}^{T}\big[\lan  X, Bu +f \ran
+\lan  S_{1}^{\top} Z+S_{2}^{\top} u +q , Y\ran
+\lan  S_{1} Y+R_{11} Z +\rho_1 , Z\ran \big] d t \\
\ns\ds &= V(\xi)- \dbE \lan g, Y(0)\ran
-\dbE  \int_{0}^{T}\big[
 -\lan  X,  f \ran
 +\lan  q , Y\ran
 +\lan  \rho_1 , Z\ran
 +\lan  \rho_2, u \ran
\big] dt.
\end{align*}
In other words, we have
$$\begin{aligned}
V(\xi)
=\dbE \lan  X(T), Y(T)\ran  + \dbE \lan  g, Y(0)\ran
+\dbE  \int_{0}^{T}\big[
 -\lan  X,  f \ran
 +\lan  q , Y\ran
 +\lan  \rho_1 , Z\ran
 +\left\lan  \rho_2, u\right\ran
\big] dt.
\end{aligned}$$
From Theorem \ref{Thm4.2}, we see that $X$ also satisfies the equation \rf{4.4} and $\f$ satisfies the equation \rf{4.3}. Applying the integration by parts formula to  $\left\lan X,\f\right\ran$, we obtain that
%
%
\begin{align*}
\ds & \dbE \lan X(T), \varphi(T)\ran -\dbE\lan g, \varphi(0)\ran \\
\ns\ds &= \dbE  \int_{0}^{T}\left\{\big\lan[S_{1}^{\top} \mathcal{R}(\Si)^{-1} \Si \mathcal{C}(\Si)^{\top}+S_{2}^{\top} R_{22}^{-1} \mathcal{B}(\Si)^{\top}-A^{\top}] X, \varphi\big\ran \right.\\
\ns\ds &\q -\big\lan [S_{1}^{\top} \mathcal{R}(\Si)^{-1} \Si S_{1}+S_{2}^{\top} R_{22}^{-1} S_{2}] \varphi, \varphi\big\ran
+\lan  S_{1}^{\top} \mathcal{R}(\Si)^{-1} \beta
-S_{1}^{\top} \mathcal{R}(\Si)^{-1} \Si\rho_1-S_{2}^{\top} R_{22}^{-1}\rho_2+q,
\varphi\ran  \\
\ns\ds &\q +\big\lan  X, [A-\mathcal{B}(\Si) R_{22}^{-1} S_{2}-\mathcal{C}(\Si) \mathcal{R}(\Si)^{-1} \Si S_{1} ] \varphi\big\ran \\
\ns\ds &\q+\big\lan  X, \mathcal{C}(\Si) \mathcal{R}(\Si)^{-1}\beta
-\cC(\Si) \mathcal{R}(\Si)^{-1}\Si\rho_1-\cB(\Si)R_{22}^{-1}\rho_2+\Si q+f\big\ran  \\
\ns\ds &\q \left.-\big\lan [\mathcal{R}(\Si)^{-1}]^{\top}[\mathcal{C}(\Si)^{\top} X-S_{1} \varphi-R_{11} \beta -\rho_1], \beta\big\ran \right\} d t \\
\ns\ds &= \dbE  \int_{0}^{T}\left\{\big\lan  R_{11} \mathcal{R}(\Si)^{-1} \beta
+\mathcal{R}(\Si)^{-1}\rho_1, \beta\big\ran
+\big\lan  2S_{1}^{\top} \mathcal{R}(\Si)^{-1} \beta
-S_{1}^{\top} \mathcal{R}(\Si)^{-1} \Si\rho_1-S_{2}^{\top} R_{22}^{-1}\rho_2+q, \varphi\big\ran \right.\\
\ns\ds&\q +\big\lan X,-\cC(\Si) \mathcal{R}(\Si)^{-1}\Si\rho_1-\cB(\Si)R_{22}^{-1}\rho_2+\Si q+f\big\ran
\left.-\big\lan [S_{1}^{\top} \mathcal{R}(\Si)^{-1}
\Si S_{1}+S_{2}^{\top} R_{22}^{-1} S_{2}] \varphi, \varphi\big\ran \right\} d t.
\end{align*}
Note that
$$ \dbE\lan  X(T), Y(T)\ran =\dbE \lan  X(T), \xi\ran =\dbE \lan  X(T), \varphi(T)\ran , $$
then we have
\begin{align*}
\ds V(\xi) &=\dbE \lan  X(T), \f(T)\ran  + \dbE \lan  g, Y(0)\ran
+\dbE  \int_{0}^{T}\big[  -\lan  X,  f \ran
 +\lan  q , Y\ran   +\lan  \rho_1 , Z\ran
 + \lan  \rho_2, u \ran  \big] dt\\
\ns\ds &= \dbE \lan  g, \f(0)\ran +\dbE \lan  g, Y(0)\ran
+\dbE  \int_{0}^{T}\big[  -\lan  X,  f \ran
 +\lan  q , Y\ran   +\lan  \rho_1 , Z\ran
 + \lan  \rho_2, u \ran  \big] dt\\
\ns\ds &\q+ \dbE  \int_{0}^{T}\bigg\{\lan  R_{11} \mathcal{R}(\Si)^{-1} \beta
+\mathcal{R}(\Si)^{-1}\rho_1, \beta \ran \\
\ns\ds &\q + \lan  2S_{1}^{\top} \mathcal{R}(\Si)^{-1} \beta
-S_{1}^{\top} \mathcal{R}(\Si)^{-1} \Si\rho_1-S_{2}^{\top} R_{22}^{-1}\rho_2+q, \varphi \ran \\
\ns\ds &\q + \lan  X, -\cC(\Si) \mathcal{R}(\Si)^{-1}\Si\rho_1
-\cB(\Si)R_{22}^{-1}\rho_2+\Si q+f \ran \\
\ns\ds &\q -\big\lan  [S_{1}^{\top} \mathcal{R}(\Si)^{-1}
\Si S_{1}+S_{2}^{\top} R_{22}^{-1} S_{2} ] \varphi, \varphi\big\ran \bigg\} d t.
\end{align*}
From Theorem \ref{Thm4.2}, we see that $Y$, $Z$ and $u$ have the following representation
\begin{align*}
\ds Y(t)&= -\Si X+\f,\\
\ns\ds Z(t)&= \mathcal{R}(\Si)^{-1}\big[\Si \mathcal{C}(\Si)^{\top} X-\Si S_{1} \varphi -\Si\rho_1 +\beta\big], \\
\ns\ds u(t)&= R_{22}^{-1}\big[\mathcal{B}(\Si)^{\top} X-S_{2} \varphi -\rho_2 \big].
\end{align*}
Moreover, note that
\begin{align*}
\ds \lan  X,\Si q\ran
&=\lan \Si X-\f, q\ran+\lan\f,q\ran=-\lan Y,q\ran +\lan\f,q\ran,\\
\ns\ds \left\lan X, \cB(\Si)R_{22}^{-1}\rho_2 \right\ran
&=\left\lan R_{22}^{-1}\left[\mathcal{B}(\Si)^{\top} X-S_{2} \varphi-\rho_2 \right],\rho_2 \right\ran
 +\left\lan R_{22}^{-1}\left(S_{2} \varphi+\rho_2 \right),\rho_2 \right\ran\\
\ns\ds &=\left\lan u,\rho_2\right\ran
 +\left\lan R_{22}^{-1}\left[S_{2} \varphi+\rho_2 \right],\rho_2 \right\ran,\\
\ns\ds \left\lan X, \cC(\Si) \mathcal{R}(\Si)^{-1}\Si\rho_1 \right\ran
&=\left\lan \mathcal{R}(\Si)^{-1}\left[\Si \mathcal{C}(\Si)^{\top} X
-\Si S_{1}\varphi -\Si\rho_1 +\beta\right],\rho_1 \right\ran \\
\ns\ds &\hp{=\ } +\left\lan\mathcal{R}(\Si)^{-1}\left[\Si S_{1} \varphi
+\Si\rho_1 -\beta\right],\rho_1 \right\ran\\
\ns\ds &=\left\lan Z,\rho_1\right\ran+\left\lan \mathcal{R}(\Si)^{-1}\left[\Si S_{1} \varphi+\Si\rho_1 -\beta\right],\rho_1 \right\ran.
\end{align*}
Then we obtain that
\begin{align*}
\ds V(\xi) &=2\dbE \lan g, \f(0)\ran-\dbE \lan \Si(0)g, g\ran\\
\ns\ds &\hp{=\ }  +\dbE  \int_{0}^{T} \Big\{\lan\f,q\ran
 -\left\lan R_{22}^{-1}\left[S_{2} \varphi+\rho_2 \right],\rho_2 \right\ran
 -\left\lan \mathcal{R}(\Si)^{-1}\left[\Si S_{1} \varphi+\Si\rho_1 -\beta\right],
 \rho_1 \right\ran \Big\} dt\\
\ns\ds &\hp{=\ } + \dbE\int_{0}^{T}\Big\{\left\lan  R_{11} \mathcal{R}(\Si)^{-1} \beta
+\mathcal{R}(\Si)^{-1}\rho_1, \beta\right\ran \\
\ns\ds &\hp{=\ }  +\big\lan  2S_{1}^{\top} \mathcal{R}(\Si)^{-1} \beta
-S_{1}^{\top} \mathcal{R}(\Si)^{-1} \Si\rho_1-S_{2}^{\top} R_{22}^{-1}\rho_2+q, \varphi\big\ran \\
\ns\ds &\hp{=\ }
-\big\lan[S_{1}^{\top} \mathcal{R}(\Si)^{-1} \Si S_{1}+S_{2}^{\top} R_{22}^{-1} S_{2}] \varphi, \varphi\big\ran \Big\} d t\\
\ns\ds &=\dbE\bigg\{2\lan\f(0),g\ran- \lan\Si(0)g,g\ran+\int_0^T \Big\{
 -\left\lan \mathcal{R}(\Si)^{-1}\Si\rho_1, \rho_1 \right\ran
 -\left\lan R_{22}^{-1}\rho_2,\rho_2 \right\ran
 +2\left\lan \mathcal{R}(\Si)^{-1}\beta, \rho_1 \right\ran \\
\ns\ds &\hp{=\ } +\left\lan R_{11} \mathcal{R}(\Si)^{-1}\beta, \beta\right\ran
+2\big\lan S_{1}^{\top} \mathcal{R}(\Si)^{-1} \beta
-S_{1}^{\top} \mathcal{R}(\Si)^{-1} \Si\rho_1-S_{2}^{\top} R_{22}^{-1}\rho_2+q, \varphi\big\ran\\
\ns\ds &\hp{=\ } -\big\lan [S_{1}^{\top} \mathcal{R}(\Si)^{-1}\Si S_{1}+S_{2}^{\top} R_{22}^{-1} S_{2} ] \varphi, \varphi\big\ran\Big\} dt\bigg\}.
\end{align*}
This completes the proof.
\end{proof}

\subsection{The general case.}

In this subsection, we generalize the results obtained above to the situation that without the condition \rf{4.1}. We shall only present the result, as the proof can be easily given using the argument at the beginning of Subsection \ref{subsec4.1} and the results established there for the condition \rf{4.1}.

\ms

Recall the following notations, for $t\in[0,T]$,
\begin{align*}
\ds \mathscr{C}(t) &=C(t)-B(t)\left[R_{22}(t)\right]^{-1} R_{21}(t), \\
\ns\ds \mathscr{S}_{1}(t) &=S_{1}(t)-R_{12}(t)\left[R_{22}(t)\right]^{-1} S_{2}(t), \\
\ns\ds \mathscr{R}_{11}(t) &=R_{11}(t)-R_{12}(t)\left[R_{22}(t)\right]^{-1} R_{21}(t).
\end{align*}
Let $H\in C([0,T];\dbS^n)$ be the unique solution to the following linear ordinary differential equation,
\begin{equation*}\left\{\ba
\ds & \dot{H}(t)+H(t) A(t)+A(t)^{\top} H(t)+Q(t)=0, \q t\in[0, T], \\
\ns\ds & H(0)=-G.
\ea\right.\end{equation*}
Moreover, for $t\in[0,T]$, denote
\begin{align*}
\ds S_{1}^{H}(t) &=\mathscr{S}_{1}(t)+\mathscr{C}(t)^{\top} H(t),
& \mathcal{B}^{H}(t, \Si(t)) &=B(t)+\Si(t)\lt[S_{2}^{H}(t)\rt]^{\top}, \\
\ns\ds S_{2}^{H}(t) &=S_{2}(t)+B(t)^{\top} H(t), & \mathcal{C}^{H}(t, \Si(t)) &=\mathscr{C}(t)+\Si(t)\left[S_{1}^{H}(t)\right]^{\top}, \\
\ns\ds R_{11}^{H}(t) &=\mathscr{R}_{11}(t)+H(t), & \mathcal{R}^{H}(t, \Si(t)) &=I+\Si(t) R_{11}^{H}(t),\\
\ns\ds q^H(t) &=q(t) +H(t)f(t).
\end{align*}

\begin{theorem}
Under the conditions (A1)-(A3), the following results hold.
\begin{itemize}
\item [(i)] Let $(\f,\b)$ be the adapted solution to the following BSDE
\begin{equation}\label{4.24}\left\{\ba
\ns\ds d\varphi(t) &= \Big\{ \left[A-\mathcal{B}^H(\Si) R_{22}^{-1} S_{2}^H-\mathcal{C}^H(\Si) [\mathcal{R}^H(\Si)]^{-1} \Si S_{1}^H\right] \varphi
+\cC^H(\Si) [\mathcal{R}^H(\Si)]^{-1} \beta \\
\ns\ds &\hp{=\ } -\cC^H(\Si) [\mathcal{R}^H(\Si)]^{-1}\Si\rho_1
-\cB^H(\Si) R_{22}^{-1}\rho_2+\Si q^H+f \Big\} d t+\beta(t) d W(t), \quad t \in[0, T], \\
\ns\ds \varphi(T) &= \xi,
\ea\right.\end{equation}
and let $X$ the solution to the following SDE
\begin{equation*}\left\{\begin{aligned}
\ds dX(t) &=\Big\{\big[(S_{1}^H)^{\top} [\cR^H(\Si)]^{-1} \Si [\cC^H(\Si)]^{\top}+(S_{2}^H)^{\top} R_{22}^{-1} [\cB^H(\Si)]^{\top}-A^{\top}\big] X\\
\ns\ds &\hp{=\ } -\big[(S_{1}^H)^{\top} [\cR^H(\Si)]^{-1}\Si S^H_{1}+(S_{2}^H)^{\top}R_{22}^{-1}S_{2}^H\big]\varphi
+(S_{1}^H)^{\top} [\cR^H(\Si)]^{-1} \beta\\
\ns\ds &\hp{=\ } -(S_{1}^H)^{\top} [\cR^H(\Si)]^{-1} \Si\rho_1-(S_{2}^H)^{\top} R_{22}^{-1}\rho_2+q^H \Big\} d t \\
\ns\ds &\hp{=\ } - \lt[\cR^H(\Si)^{-1}\rt]^{\top}\big[[\cC^H(\Si)]^{\top} X-S_{1}^H \varphi-R_{11} \beta -\rho_1\big] dW(t),\q t\in[0,T], \\
\ns\ds X(0)&= g .
\end{aligned}\right.
\end{equation*}
Then the optimal control of Problem (BSLQ) for the terminal state $\xi$ is given by
\begin{align*}
\ds u &= R_{22}^{-1}\Big\{\big[[\mathcal{B}^{H}(\Si)]^{\top}-R_{21} [\mathcal{R}^{H}(\Si)]^{-1} \Si [\mathcal{C}^{H}(\Si)]^{\top}\big] X\\
\ns\ds &\hp{=\ } +\big[R_{21} [\mathcal{R}^{H}(\Si)]^{-1} \Si S_{1}^{H}-S_{2}^{H}\big] \varphi
+R_{21} [\mathcal{R}^{H}(\Si)]^{-1}\Si\rho_1
-R_{21} [\mathcal{R}^{H}(\Si)]^{-1} \beta  -\rho_2 \Big\},
\end{align*}
where $\Si$ is the unique positive semidefinite solution of the following Riccati equation,
$$\left\{\ba
\ds & \dot{\Si} -A \Si-\Si A^{\top}+\mathcal{B}^{H}(\Si)\lt[R_{22}\rt]^{-1}\lt[\cB^{H}(\Si)\rt]^{\top} \\
\ns\ds &\hp{\dot{\Si}} +\cC^{H}(\Si)\left[\cR^{H}(\Si)\right]^{-1} \Si\left[\cC^{H}(\Si)\right]^{\top}=0, \\
\ns\ds & \Si(T) =0.
\ea\right.$$
\item [(ii)] The value function of Problem (BSLQ) is given by
\begin{equation*}\begin{aligned}
\ds V(\xi) &= \dbE\bigg\{-\lan H(T)\xi,\xi\ran+2\lan\f(0),g\ran-\lan\Si(0)g,g\ran \\
\ns\ds &\hp{=\ } +\int_{0}^{T} \Big\{-\left\lan[\cR^H(\Si)]^{-1}\Si\rho_1,\rho_1\right\ran
 -\left\lan R_{22}^{-1}\rho_2,\rho_2 \right\ran\\
\ns\ds &\hp{=\ }
 +2\left\lan\cR(\Si)^{-1}\beta, \rho_1 \right\ran +\left\lan R_{11} [\cR^H(\Si)]^{-1} \beta,\beta\right\ran\\
\ns\ds &\hp{=\ } +2\big\lan (S^H_{1})^{\top} [\cR^H(\Si)]^{-1} \beta
-(S^H_{1})^{\top} [\cR^H(\Si)]^{-1} \Si\rho_1-(S_{2}^H)^{\top} R_{22}^{-1}\rho_2+q^H, \varphi\big\ran\\
\ns\ds &\hp{=\ } -\big\lan\big[(S^H_{1})^{\top} [\cR^H(\Si)]^{-1} \Si S_{1}^H
+(S_{2}^H)^{\top} R_{22}^{-1} S^H_{2}\big] \varphi, \varphi\big\ran\Big\} d t\bigg\},
\end{aligned}\end{equation*}
where $(\f,\b)$ is the adapted solution of BSDE \rf{4.24}.
\end{itemize}
\end{theorem}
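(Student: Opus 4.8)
The plan is to deduce this theorem from the special case \rf{4.1}, already solved in Subsection~\ref{subsec4.1}, by running the two reductions set up in the preamble of Section~4 and then transporting the conclusions back. First I would pass to an equivalent problem with no cross term between $Z$ and $u$: since (A3) forces $R_{22}\gg0$, the substitution $v=u+R_{22}^{-1}R_{21}Z$ together with the notation \rf{4.21} gives the state equation \rf{4.22} and the cost \rf{4.23}. Then, with $H\in C([0,T];\dbS^n)$ solving $\dot H+HA+A^\top H+Q=0$, $H(0)=-G$, the integration-by-parts formula applied to $\lan HY,Y\ran$ absorbs the $G$- and $Q$-terms into the running cost, producing the $H$-decorated data $S_1^H,S_2^H,R_{11}^H$, the inhomogeneity $q^H=q+Hf$, and the additive constant $-\dbE\lan H(T)\xi,\xi\ran$. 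After these two steps Problem~(BSLQ) has exactly the form \rf{4.1}, with control $v$, coefficient $\mathscr{C}$ in front of $Z$, running-cost weights $S_1^H,S_2^H,R_{11}^H,R_{22}$ (and $Q=R_{12}=0$) and inhomogeneities $q^H,\rho_1,\rho_2$; moreover (A1)--(A3) for the original data carry over to (A1)--(A3) for the reduced data, because $u\leftrightarrow v$ is a bijection of $\cU$ that leaves the homogeneous cost unchanged and is norm-equivalent by Lemma~\ref{estimate}.

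Next I would invoke, with the decorated data in place of the plain data, the three results of Subsection~\ref{subsec4.1}: the proposition on the Riccati equation \rf{Riccati} supplies the unique positive semidefinite $\Si$ solving the Riccati equation displayed in part~(i); Theorem~\ref{Thm4.2} (read with $S_1\to S_1^H$, $S_2\to S_2^H$, $C\to\mathscr{C}$, $q\to q^H$, $\mathcal{B}\to\mathcal{B}^H$, $\mathcal{C}\to\mathcal{C}^H$, $\mathcal{R}\to\mathcal{R}^H$) supplies the forward adjoint $(\f,\b)$ solving \rf{4.24}, the state $X$ of the SDE in part~(i), the optimal $v=R_{22}^{-1}[\mathcal{B}^H(\Si)^\top X-S_2^H\f-\rho_2]$, and the representations $Y=-\Si X+\f$ and $Z=\mathcal{R}^H(\Si)^{-1}[\Si\mathcal{C}^H(\Si)^\top X-\Si S_1^H\f-\Si\rho_1+\b]$; and Theorem~\ref{Thm3.1} then confirms optimality. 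Undoing the first substitution via $u=v-R_{22}^{-1}R_{21}Z$ and inserting the two displayed formulas, the coefficients of $X,\f,\rho_1,\rho_2,\b$ collect into exactly the expression for $u$ in part~(i). For part~(ii), I would take the value-function formula \rf{4.14} with the decorated data substituted and add back the constant $-\dbE\lan H(T)\xi,\xi\ran$ from the first step; since the reduced problem already satisfies \rf{4.1}, nothing further needs adjusting.

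The main obstacle is the bookkeeping in the last reconciliation: one has to verify that expanding $u=v-R_{22}^{-1}R_{21}Z$ with the closed-loop representation of $Z$ in terms of $X,\f,\b,\rho_1$ reproduces precisely the grouping $R_{22}^{-1}\{[\mathcal{B}^H(\Si)^\top-R_{21}\mathcal{R}^H(\Si)^{-1}\Si\mathcal{C}^H(\Si)^\top]X+\cdots\}$ of part~(i), and likewise that the $v$-substitution leaves the running-cost integrand of the value function unchanged once everything is rewritten in the $(Y,Z,v)$ variables. These are exactly the manipulations already carried out in detail in Subsection~\ref{subsec4.1}; here they only need to be re-run with the two layers of corrections --- the script-decorations coming from the $v$-substitution and the $H$-decorations coming from the $H$-transformation --- kept carefully apart, so no new idea is required.
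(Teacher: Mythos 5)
Your proposal is correct and follows essentially the same route the paper intends: the paper itself omits the proof of this theorem, stating only that it follows from the two reductions at the beginning of Section 4 (the substitution $v=u+R_{22}^{-1}R_{21}Z$ and the $H$-transformation) combined with the results of Subsection 4.1, which is exactly what you carry out, including the correct unwinding $u=v-R_{22}^{-1}R_{21}Z$ that reproduces the stated closed-loop formula. Your added remark that (A3) transfers to the reduced problem because $u\leftrightarrow v$ is a norm-equivalent bijection of $\cU$ preserving the homogeneous cost is a detail the paper leaves implicit, and it is a worthwhile clarification.
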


\section{Conclusion}

In this paper, we have investigated an indefinite backward stochastic linear-quadratic optimal
control problem with deterministic nonhomogeneous coefficients and have developed a general procedure
for constructing optimal controls.
The necessary and sufficient conditions of Problem (BSLQ) are derived for the solvability of the problem, and a characterization of the optimal control in terms of forward-backward stochastic differential equations has presented. The optimal control and the value function of Problem (BLSQ) are given out clearly.
The results obtained in this paper provide insight into some related topics, especially into
the study of zero-sum backward stochastic differential games.
In our future publication, we hope to report some relevant results along this line.

\end{document}